\numberwithin{equation}{section}
\newcommand{\C}{\mathbb{C}}
\newcommand{\R}{\mathbb{R}}
\newcommand{\x}{\mathbf{x}}
\newcommand{\y}{\mathbf{y}}
\newcommand{\z}{\mathbf{z}}
\newcommand{\p}{\mathbf{p}}
\renewcommand{\Re}{\textrm{Re}}
\newcommand{\bea}{\begin{eqnarray}}
\newcommand{\eea}{\end{eqnarray}}
\newcommand{\be}{\begin{eqnarray*}}
\newcommand{\ee}{\end{eqnarray*}}
\newtheorem{theorem}{Theorem}[section]
\newtheorem{lemma}[theorem]{Lemma}
\newtheorem{corollary}[theorem]{Corollary}
\newtheorem{definition}[theorem]{Definition}
\begin{document}

\title[Newton's Method Backpropagation]{Newton's Method Backpropagation for Complex-Valued Holomorphic Multilayer Perceptrons}
\author[Diana Thomson La Corte and Yi Ming Zou]{Diana Thomson La Corte and Yi Ming Zou}

\maketitle

\begin{abstract}
The study of Newton's method in complex-valued neural networks faces many difficulties.  In this paper, we derive Newton's method backpropagation algorithms for complex-valued holomorphic multilayer perceptrons, and investigate the convergence of the one-step Newton steplength algorithm for the minimization of real-valued complex functions via Newton's method.  To provide experimental support for the use of holomorphic activation functions, we perform a comparison of using sigmoidal functions versus their Taylor polynomial approximations as activation functions by using the algorithms developed in this paper and the known gradient descent backpropagation algorithm.   Our experiments indicate that the Newton's method based algorithms, combined with the use of polynomial activation functions, provide significant improvement in the number of training iterations required over the existing algorithms.
\end{abstract}


\section{Introduction}

\par
The use of fully complex-valued neural networks to solve real-valued as well as complex-valued problems in physical applications has become increasingly popular in the neural network community in recent years \cite{Hirose2012,Hirose2011,Mukherjee2012}.
Complex-valued neural networks pose unique problems, however. Consider the problem of choosing the activation functions for a neural network.  Real-valued activation functions for real-valued neural networks are commonly taken to be everywhere differentiable and bounded.  Typical activation functions used for real-valued neural networks are the sigmoidal, hyperbolic tangent, and hyperbolic secant functions
\begin{equation*}
 f(x)=\frac{1}{1+\exp(-x)}, \textrm{ } \tanh(x)=\frac{e^x-e^{-x}}{e^x+e^{-x}}, \textrm{ and } \textrm{sech}(x)=\frac{2}{e^x+e^{-x}}.
\end{equation*}
For activation functions of complex-valued networks, an obvious choice is to use the complex counterparts of these real-valued functions.  However, as complex-valued functions, these functions are no longer differentiable and bounded near $0$, since they have poles near $0$.  Different approaches have been proposed in the literature to address this problem.

\par
Liouville's theorem tells us that there is no non-constant complex-valued function which is both bounded and differentiable on the whole complex plane \cite{Conway1978}.  On the basis of Liouville's theorem, \cite{Georgiou1992} asserts that an entire function is not suitable as an activation function for a complex-valued neural network and claims boundedness as an essential property of the activation function.  Some authors followed this same reasoning and use the so-called ``split'' functions of the type $f(z)=f(x+iy)=f_1(x)+if_2(y)$ where $f_1,f_2$ are real-valued functions, typically taken to be one of the sigmoidal functions \cite{Jalab2011,Kim2001,Pande2007}.  Such activation functions have the advantage of easily modeling data with symmetry about the real and imaginary axes. However, this yields complex-valued neural networks which are close to real-valued networks of double dimensions and are not fully complex-valued \cite{Hirose2012}.  Amplitude-phase-type activation functions have the type $f(z)=f_3(\vert z\vert) \exp (i\text{arg}(z))$ where $f_3$ is a real-valued function.  These process wave information well, but have the disadvantage of preserving phase data, making the training of a network more difficult \cite{Hirose2012,Kim2001}.  Some authors forgo complex-valued activation functions entirely, choosing instead to scale the complex inputs using bounded real-valued functions which are differentiable with respect to the real and imaginary parts \cite{Amin2009-2,Amin2009,Amin2008}.  While this approach allows for more natural grouping of data for classification problems, it requires a modified backpropagation algorithm to train the network, and again the networks are not fully complex-valued.  Other authors choose differentiability over boundedness and use elementary transcendental functions \cite{Hanna2003,Kim2001,Kim2002}.  Such functions have been used in complex-valued multilayer perceptrons trained using the traditional gradient descent backpropagation algorithm and in other applications \cite{Burse2011,Li2005,Savitha2011}.  However, the problem of the existence of poles in a bounded region near $0$ presents again.  Though one can try to scale the data to avoid the regions which contain poles \cite{Leung1991}, this does not solve the problem, since for unknown composite functions, the locations of poles are not known {\it a priori}.  The exponential function $\exp(z)$ has been proposed as an alternative to the elementary transcendental functions for some complex-valued neural networks, and experimental evidence suggests better performance of the entire exponential function as activation function than those with poles \cite{Savitha2009}.

\par
In this paper, we will derive the backpropagation algorithm for fully complex-valued neural networks based on Newton's method.  We compare the performances of using the complex-valued sigmoidal activation function and its Taylor polynomial approximations.  Our results give strong supporting evidence for the use of holomorphic functions, in particular polynomial functions, as activation functions for complex-valued neural networks.  Polynomials have been used in fully complex-valued functional link networks \cite{Amin2011,Amin2012}, however their use is limited as activation functions for fully complex-valued multilayer perceptrons.  Polynomial functions are differentiable on the entire complex plane and are underlying our computations due to Taylor's Theorem, and they are bounded over any bounded region.  Moreover, the complex Stone-Weierstrass Theorem implies that any continuous complex-valued function on a compact subset of the complex plane can be approximated by a polynomial \cite{Reed1980}.  Due to the nature of the problems associated with the activation functions in complex-valued neural networks, different choices of activation functions can only suit different types of neural networks properly, and one should only expect an approach to be better than the others in certain applications.

\par
We will allow a more general class of complex-valued functions for activation functions, namely the holomorphic functions. There are two important reasons for this.  The first one is that holomorphic functions encompass a general class of functions that are commonly used as activation functions.  They allow a wide variety of choices both for activation functions and training methods.  The second is that the differentiability of holomorphic functions leads to much simpler formulas in the backpropagation algorithms.  For application purpose, we will also consider the backpropagation algorithm using the pseudo-Newton's method, since it has computational advantage.  Our main results are given by Theorem \ref{thm:complexnewtbackprop}, Corollary \ref{cor:complexp-newtbackprop}, and Theorem \ref{thm:ConvergenceNewton}.  Theorem \ref{thm:complexnewtbackprop} gives a recursive algorithm to compute the entries of the Hessian matrices in the application of Newton's method to the backpropagation algorithm for complex-valued holomorphic multilayer perceptrons, and Corollary \ref{cor:complexp-newtbackprop} gives a recursive algorithm for the application of the pseudo-Newton's method to the backpropagation algorithm based on Theorem \ref{thm:complexnewtbackprop}.  The recursive algorithms we developed are analogous to the known gradient descent backpropagation algorithm as stated in Section III, hence can be readily implemented in real-world applications.  A problem with Newton's method is the choice of steplengths to ensure the algorithm actually converges in applications.  Our setting enables us to perform a rigorous analysis for the one-step Newton steplength algorithm for the minimization of real-valued complex functions using Newton's method.  This is done in Section VI.  Our experiments, reported in Section VII, show that the algorithms we developed use significantly fewer iterations to achieve the same results as the gradient descent algorithm.  We believe that the Newton's method backpropagation algorithm provides a valuable tool for fast learning for complex-valued neural networks as a practical alternative to the gradient descent methods.

\par
The rest of the paper is as follows.  In Section II, we define holomorphic multilayer perceptrons and set up our notation for the network architecture we use throughout the rest of the paper.  In Section III, we give a reformulation of the gradient descent backpropagation algorithm based on our setting of holomorphic neural networks.  In Section IV, we derive the backpropagation algorithm for holomorphic multilayer perceptrons using Newton's method, and in Section V we restirct the results of Section IV to the pseudo-Newton's method.  In Section VI we state the one-step Newton steplength algorithm, and in Section VII we report our experiments.  The appendices provide the detailed computations omitted from Section IV and a detailed proof, omitted from Section VI, of the convergence of the one-step Newton steplength algorithm for the minimization of real-valued complex functions.

\section{Holomorphic MLPs: Definition and Network Architecture}

\par
A well-used type of artificial neural network is the multilayer perceptron (MLP).  An MLP is built of several layers of single neurons hooked together by a network of weight vectors.  Usually the activation function is taken to be the same among a single layer of the network; the defining characteristic of the MLP is that in at least one layer, the activation function must be nonlinear.  If there is no nonlinear activation function, the network can be collapsed to a two-layer network \cite{Buchholz2008}.

\begin{definition}
A {\bf holomorphic MLP} is a complex-valued MLP in which the activation function in the layer indexed by $p$ of the network is holomorphic on some domain $\Omega_p\subseteq\C$.
\label{def:HolomorphicMLP}
\end{definition}

\par
Most of the publications on complex-valued neural networks with holomorphic activation functions deal with functions that have poles.  We will mainly focus on entire functions for the purpose of applying Newton's method.  For these functions, we do not have to worry about the entries of a Hessian matrix hitting the poles.  However, we will allow some flexibility in our setting and set up our notation for a general $L$-layer holomorphic MLP as follows (see Figure \ref{fig:networkdiagram}).

\begin{figure}[b]\begin{center}
\includegraphics[scale=0.6]{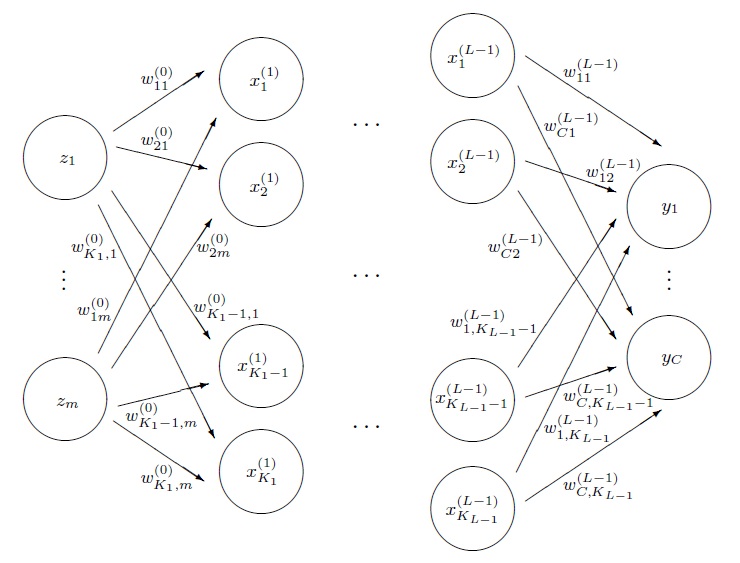}
\caption{Network Architecture}
\label{fig:networkdiagram}\end{center}
\end{figure}

\begin{itemize}
\item The input layer has $m=K_0$ input nodes denoted $$z_1=x^{(0)}_1,...,z_m=x^{(0)}_m.$$ \item There are $L-1$ hidden layers of neurons, and the $p$th ($1\le p\le L-1$) hidden layer contains $K_p$ nodes.  We denote the output of node $j$ ($j=1,...,K_p$) in the $p$th layer by $x^{(p)}_j.$  The inputted weights to the $p$th layer are denoted $w^{(p-1)}_{ji}$ ($j=1,...,K_p$, $i=1,...,K_{p-1}$), where $j$ denotes the target node of the weight in the $p$th layer and $i$ denotes the source node in the $(p-1)$th layer.  With these conventions we define the weighted net sum and the output of node $j$ of the $p$th layer by
\begin{equation*}
\left( x^{(p)}_j\right)^{\textrm{net}} = \sum_{i=1}^{K_{p-1}} w^{(p-1)}_{ji} x^{(p-1)}_i \textrm{ and } x^{(p)}_j = g_p \left(\left( x^{(p)}_j\right)^{\textrm{net}}\right),
\label{eq:hidlayeroutput}
\end{equation*}
where $g_p$, which is assumed to be holomorphic on some domain $\Omega_p\subseteq \C$, is the activation function for all the neurons in the $p$th layer.
\item The output layer has $C=K_L$ output nodes denoted $y_1=x^{(L)}_1,...,y_C=x^{(L)}_C.$  We define the weighted net sum and the output of node $l$ ($l=1,...,C$) by
\begin{equation*}
y_l^{\textrm{net}} = \sum_{k=1}^{K_{L-1}} w^{(L-1)}_{lk} x^{(L-1)}_k \textrm{ and }y_l = g_L \left(y_l^{\textrm{net}}\right),
\label{eq:outlayeroutput}
\end{equation*}
where $g_L$, which is assumed to be holomorphic on some domain $\Omega_L\subseteq \C$, is the activation function for all the neurons in the output layer.
\end{itemize}

\par

To train the network, we use a training set with $N$ data points $\left\{(z_{t1},...,z_{tm},d_{t1},...,d_{tC}) \, \vert \, t=1,...,N \right\}$, where $(z_{t1},...,z_{tm})$ is the input vector corresponding to the desired output vector $(d_{t1},...,d_{tC})$. As the input vector $(z_{t1},...,z_{tm})$ of the $t$th training point is propagated throughout the network we update the subscripts of the network calculations with an additional $t$ subscript to signify that those values correspond to the $t$th training point.  For example, $( x^{(p)}_{tj})^{\textrm{net}},\textrm{ } x^{(p)}_{tj},\textrm{ } y_{tl}^{\textrm{net}}, \textrm{ and } y_{tl}.$  Finally, we train the network by minimizing the standard sum-of-squares error function
\begin{equation*}
E=\frac{1}{N}\sum_{t=1}^N \sum_{l=1}^C \vert y_{tl}-d_{tl}\vert^2 = \frac{1}{N}\sum_{t=1}^N \sum_{l=1}^C \left(y_{tl}-d_{tl}\right)\left( \overline{y_{tl}}-\overline{d_{tl}}\right).
\label{eq:error}
\end{equation*}

\section{The Gradient Descent Backpropagation Algorithm}

\par

Minimization of the error function can be achieved through the use of the backpropagation algorithm.  Backpropagation trains the network by updating the output layer weights first in each step (via an update rule from some numerical minimization algorithm), then using the updated output layer weights to update the first hidden layer weights, and so on, ``backpropagating'' the updates throughout the network until a desired level of accuracy is achieved (usually, this is when the error function drops below a pre-fixed value).  In the case of real-valued neural networks, minimization of the error function by Newton's method is generally thought to be too computationally ``expensive,'' and several different methods are commonly used to approximate the Hessian matrices instead of computing them directly: for example the conjugate gradient, truncated Newton, Gauss-Newton and Levenberg-Marquardt algorithms \cite{Al-Haik2003,Beigi1993,Hagan1994,Mukherjee2012,Yu2011}.  In contrast, for complex-valued neural networks, gradient descent methods, which are known to give stable (albeit slow) convergence, are commonly used due to their relatively simple formulations, and a number of such minimization algorithms exist \cite{Goh2005,Leung1991,Zimmermann2011}.

\par

We reformulate a backpropagation algorithm using gradient descent according to our setting of the neural networks defined in Section II for two reasons: the algorithm has a much simpler formulation compared with the known ones \cite{Buchholz2008,Leung1991} due to the activation functions being taken to be holomorphic, and we will use it for comparison purpose.  A similar formulation of the backpropagation algorithm to ours is presented in \cite{Li2008}.  The formulas of gradient descent for complex functions can be found in \cite{Kreutz-Delgado2009}. We use the following vector notation.  For $1\le p\le L$, we denote the weights that input into the $p$th layer of the network using a vector whose components correspond to the target nodes:
\begin{equation*}
\mathbf{w}^{(p-1)}:=\left( w^{(p-1)}_{11},...,w^{(p-1)}_{1K_{p-1}},...,w^{(p-1)}_{K_p1},...,w^{(p-1)}_{K_pK_{p-1}} \right)^T,
\end{equation*}
that is, the components of $\mathbf{w}^{(p-1)}$ are
\begin{equation}\mathbf{w}^{(p-1)}\left[ (j-1)\cdot K_{p-1}+i\right] =w^{(p-1)}_{ji},
\label{eq:weightcomponents}
\end{equation}
where $j=1,...,K_p$, $i=1,...,K_{p-1}$.  Using this notation the update steps for backpropagation look like
\begin{equation}
\mathbf{w}^{(p-1)}(n+1)=\mathbf{w}^{(p-1)}(n)+\mu(n) \Delta \mathbf{w}^{(p-1)},
\label{eq:updatestep}
\end{equation}
where $\mathbf{w}^{(p-1)}(n)$ denotes the weight value after the $n$th iteration of the training algorithm, and $\mu(n)$ denotes the learning rate or steplength which is allowed to vary with each iteration.  

\par
Using the gradient descent method, the update for the $(p-1)$th layer of a holomorphic complex-valued neural network is (\cite{Kreutz-Delgado2009}, p. 60)
\begin{equation*}\Delta \mathbf{w}^{(p-1)}=-\left(\frac{\partial E}{\partial \mathbf{w}^{(p-1)}} \right)^*.\end{equation*}
Suppose the activation function for the $p$th layer of the network, $p=1,...,L$, satisfies $$\overline{g(z)}=g(\overline{z}).$$  Coordinate-wise the partial derivatives $\frac{\partial E}{\partial w^{(L-1)}_{lk}}$, taken with respect to the output layer weights $w^{(L-1)}_{lk}$, $l=1,...,C$, $k=1,...,K_{L-1}$, are given by
\begin{equation*}
\begin{split}
\frac{\partial E}{\partial w^{(L-1)}_{lk}} &= \frac{\partial}{\partial w^{(L-1)}_{lk}} \left[ \frac{1}{N}\sum_{t=1}^N \sum_{h=1}^C (y_{th}-d_{th})(\overline{y_{th}}-\overline{y_{th}}) \right]\\
&=\frac{1}{N} \sum_{t=1}^N \left[ \frac{\partial y_{tl}}{\partial w^{(L-1)}_{lk}} (\overline{y_{tl}}-\overline{d_{tl}}) + (y_{tl}-d_{tl}) \frac{\partial \overline{y_{tl}}}{\partial w^{(L-1)}_{lk}}\right]\\
&=\frac{1}{N}\sum_{t=1}^N \left( \frac{\partial y_{tl}}{\partial y_{tl}^{\textrm{net}}}\frac{\partial y_{tl}^{\textrm{net}}}{\partial w^{(L-1)}_{lk}}  +\frac{\partial y_{tl}}{\partial \overline{y_{tl}^{\textrm{net}}}}\frac{\partial \overline{y_{tl}^{\textrm{net}}}}{\partial w^{(L-1)}_{lk}}\right) (\overline{y_{tl}}-\overline{d_{tl}})\\
&=\frac{1}{N}\sum_{t=1}^N (\overline{y_{tl}}-\overline{d_{tl}}) g'_L\left( y_{tl}^{\textrm{net}}\right) x^{(L-1)}_{tk},
\end{split}
\label{eq:gradupdateder}
\end{equation*}
so that
\begin{equation*}
\left( \frac{\partial E}{\partial w^{(L-1)}_{lk}} \right)^* = \frac{1}{N}\sum_{t=1}^N (y_{tl}-d_{tl}) g'_L\left( \overline{y_{tl}^{\textrm{net}}}\right) \overline{x^{(L-1)}_{tk}}.
\end{equation*}
The partial derivatives $\left( \frac{\partial E}{\partial w^{(p-1)}_{ji}}\right)^*$, taken with respect to the hidden layer weights $w^{(p-1)}_{ji}$, $1\le p \le L-1$, $j=1,...,K_p$, $i=1,...,K_{p-1}$, are computed recursively.  The partial derivatives $\frac{\partial E}{\partial w^{(L-2)}_{ji}}$, taken with respect to the $(L-2)$th hidden layer weights, are computed using the updated $(L-1)$th output layer weights:
\begin{equation*}
\begin{split}
\frac{\partial E}{\partial w^{(L-2)}_{ji}} &= \frac{\partial}{\partial w^{(L-2)}_{ji}} \left[ \frac{1}{N}\sum_{t=1}^N \sum_{l=1}^C (y_{tl}-d_{tl})(\overline{y_{tl}}-\overline{y_{tl}}) \right]\\
&= \frac{1}{N} \sum_{t=1}^N \sum_{l=1}^C \left[ \frac{\partial y_{tl}}{\partial w^{(L-2)}_{ji}} (\overline{y_{tl}}-\overline{d_{tl}}) + (y_{tl}-d_{tl}) \frac{\partial \overline{y_{tl}}}{\partial w^{(L-2)}_{ji}} \right],
\end{split}
\label{eq:gradupdateder2}
\end{equation*}
where
\begin{equation*}
\begin{split}
\frac{\partial y_{tl}}{\partial w^{(L-2)}_{ji}} &= \frac{\partial y_{tl}}{\partial y_{tl}^{\textrm{net}}} \frac{\partial y_{tl}^{\textrm{net}}}{\partial w^{(L-2)}_{ji}} + \frac{\partial y_{tl}}{\partial \overline{y_{tl}^{\textrm{net}}}} \frac{\overline{\partial y_{tl}^{\textrm{net}}}}{\partial w^{(L-2)}_{ji}}\\
&=g'_L\left( y_{tl}^{\textrm{net}}\right)\left( \frac{\partial y_{tl}^{\textrm{net}}}{\partial x_{tj}^{(L-1)}}\frac{\partial x_{tj}^{(L-1)}}{\partial w^{(L-2)}_{ji}} + \frac{\partial y_{tl}^{\textrm{net}}}{\partial \overline{x_{tj}^{(L-1)}}}\frac{\partial \overline{x_{tj}^{(L-1)}}}{\partial w^{(L-2)}_{ji}} \right)\\
&= g'_L \left( y_{tl}^{\textrm{net}}\right) w^{(L-1)}_{lj} \left( \frac{\partial x_{tj}^{(L-1)}}{\partial \left( x_{tj}^{(L-1)}\right)^{\textrm{net}}} \frac{\partial \left( x_{tj}^{(L-1)}\right)^{\textrm{net}}}{\partial w^{(L-2)}_{ji}}\right.\\
&\hspace{50mm}\left.+ \frac{\partial x_{tj}^{(L-1)}}{\partial \overline{\left( x_{tj}^{(L-1)}\right)^{\textrm{net}}}} \frac{\partial \overline{\left( x_{tj}^{(L-1)}\right)^{\textrm{net}}}}{\partial w^{(L-2)}_{ji}} \right)\\
&= g'_L \left( y_{tl}^{\textrm{net}}\right) w_{lj}^{(L-1)} g'_{L-1}\left( \left( x_{tj}^{(L-1)}\right)^{\textrm{net}}\right) x_{ti}^{(L-2)}
\end{split}
\end{equation*}
and
\begin{equation*}
\begin{split}
\frac{\partial \overline{y_{tl}}}{\partial w_{ji}^{(L-2)}} &= \frac{\partial \overline{y_{tl}}}{\partial \overline{y_{tl}^{\textrm{net}}}} \frac{\overline{\partial y_{tl}^{\textrm{net}}}}{\partial w^{(L-2)}_{ji}} + \frac{\partial \overline{y_{tl}}}{\partial y_{tl}^{\textrm{net}}} \frac{\partial y_{tl}^{\textrm{net}}}{\partial w^{(L-2)}_{ji}}\\
&=g'_L\left( \overline{y_{tl}^{\textrm{net}}}\right)\left( \frac{\partial \overline{y_{tl}^{\textrm{net}}}}{\partial \overline{x_{tj}^{(L-1)}}}\frac{\partial \overline{x_{tj}^{(L-1)}}}{\partial w^{(L-2)}_{ji}} + \frac{\partial \overline{y_{tl}^{\textrm{net}}}}{\partial x_{tj}^{(L-1)}}\frac{\partial x_{tj}^{(L-1)}}{\partial w^{(L-2)}_{ji}} \right)\\
&=g'_L \left( \overline{y_{tl}^{\textrm{net}}}\right) \overline{w^{(L-1)}_{lj}} \left( \frac{\partial \overline{x_{tj}^{(L-1)}}}{\partial \overline{\left( x_{tj}^{(L-1)}\right)^{\textrm{net}}}} \frac{\partial \overline{\left( x_{tj}^{(L-1)}\right)^{\textrm{net}}}}{\partial w^{(L-2)}_{ji}} \right.\\
&\hspace{50mm} \left.+ \frac{\partial \overline{x_{tj}^{(L-1)}}}{\partial \left( x_{tj}^{(L-1)}\right)^{\textrm{net}}} \frac{\partial \left( x_{tj}^{(L-1)}\right)^{\textrm{net}}}{\partial w^{(L-2)}_{ji}} \right)\\
&=0,
\end{split}
\end{equation*}
so that
\begin{equation*}
\begin{split}
\left(\frac{\partial E}{\partial w_{ji}^{(L-2)}} \right)^*&=\frac{1}{N}\sum_{t=1}^N \left( \sum_{l=1}^C (y_{tl}-d_{tl}) g'_L\left( \overline{y_{tl}^{\textrm{net}}}\right) \overline{w_{lj}^{(L-1)}}\right) \\
& \hspace{35mm} \cdot g'_{L-1}\left( \overline{\left( x_{tj}^{(L-1)}\right)^{\textrm{net}}}\right) \overline{x_{ti}^{(L-2)}},
\end{split}
\end{equation*}
and so on.  We summarize the partial derivatives by
\bea{\left( \frac{\partial E}{\partial w^{(p-1)}_{ji}}\right)^* = \frac{1}{N} \sum_{t=1}^N E^{(p)}_{tj} \overline{x^{(p-1)}_{ti}}}, \label{eq:grad}\eea
$1\le p\le L$, where $j=1,...,K_p$, $i=1,...,K_{p-1}$, and the $E_{tj}^{(p)}$ are given recursively by
\bea{E^{(L)}_{tl}=\left( y_{tl}-d_{tl}\right) g_L'\left( \overline{y^{\textrm{net}}_{tl}}\right), \label{eq:deltaL}}\eea
where $l=1,...,C$, $t=1,...,N$; and for $1\le p\le L-1$,
\bea{E^{(p)}_{tj} = \left[ \sum_{\alpha =1}^{K_{p+1}} E^{(p+1)}_{t\alpha} \overline{w^{(p)}_{\alpha j}}\right] g_p'\left(\overline{\left(x^{(p)}_{tj}\right)^{\textrm{net}}}\right), \label{eq:deltap}}\eea
where $j=1,...,K_p$, $t=1,...,N$.  The gradient descent method is well known to be rather slow in the convergence of the error function.  We next derive formulas for the backpropagation algorithm using Newton's method (compare with \cite{Buchholz2008,Leung1991}).

\section{Backpropagation Using Newton's Method}

\par

The weight updates for Newton's method with complex functions are given by formula (111) of \cite{Kreutz-Delgado2009} (we omit the superscripts, which index the layers, to simplify our writing): 
\begin{equation}
\Delta\mathbf{w} = \left( \mathcal{H}_{\mathbf{w}\mathbf{w}} - \mathcal{H}_{\overline{\mathbf{w}}\mathbf{w}}\mathcal{H}_{\overline{\mathbf{w}}\overline{\mathbf{w}}}^{-1}\mathcal{H}_{\mathbf{w}\overline{\mathbf{w}}} \right)^{-1}
\left[ \mathcal{H}_{\overline{\mathbf{w}}\mathbf{w}}\mathcal{H}_{\overline{\mathbf{w}}\overline{\mathbf{w}}}^{-1} \left( \frac{\partial E}{\partial \overline{\mathbf{w}}} \right)^* - \left( \frac{\partial E}{\partial \mathbf{w}} \right)^* \right].
\label{eq:NewUp} \end{equation}

\par
To apply the Newton algorithm we need to compute the Hessian matrices (again omitting the superscripts) 
\begin{equation}
\mathcal{H}_{\mathbf{w}\mathbf{w}}=\frac{\partial}{\partial \mathbf{w}} \left( \frac{\partial E}{\partial \mathbf{w}}\right)^* \textrm{ and } \mathcal{H}_{\overline{\mathbf{w}}\mathbf{w}}=\frac{\partial}{\partial \overline{\mathbf{w}}} \left( \frac{\partial E}{\partial \mathbf{w}}\right)^*,
\label{eq:hessdef}
\end{equation}
where the entries of $\left(\frac{\partial E}{\partial \mathbf{w}}\right)^*$ are given by (\ref{eq:grad}).  Note that although (\ref{eq:NewUp}) asks for the four Hessian matrices $\mathcal{H}_{\mathbf{w}\mathbf{w}}$, $\mathcal{H}_{\overline{\mathbf{w}}\mathbf{w}}$, $\mathcal{H}_{\mathbf{w}\overline{\mathbf{w}}}$, and $\mathcal{H}_{\overline{\mathbf{w}}\overline{\mathbf{w}}}$, we have $\mathcal{H}_{\mathbf{w}\overline{\mathbf{w}}} =\overline{\mathcal{H}_{\overline{\mathbf{w}}\mathbf{w}}} \textrm{ and } \mathcal{H}_{\overline{\mathbf{w}}\overline{\mathbf{w}}}=\overline{\mathcal{H}_{\mathbf{w}\mathbf{w}}}.$  Thus we only need to compute two of them.

\par
We consider the entries of the Hessian matrices $\mathcal{H}_{\mathbf{w}\mathbf{w}}$ and $\mathcal{H}_{\overline{\mathbf{w}}\mathbf{w}}$. For the $(p-1)$th layer, the entries of $\mathcal{H}_{\mathbf{w}\mathbf{w}}$ are given by (see (\ref{eq:weightcomponents}))
\begin{equation*}
\begin{split}
\mathcal{H}_{\mathbf{w}\mathbf{w}} \left[ (j-1) \cdot K_{p-1}+i  ,  (b-1)\cdot K_{p-1}+a\right] =\frac{\partial}{\partial w^{(p-1)}_{ba}}\left( \frac{\partial E}{\partial w^{(p-1)}_{ji}}\right)^*,
\end{split}
\end{equation*}
where $j,b=1,...,K_p$ and $i,a=1,...,K_{p-1}$, and the entries of  $\mathcal{H}_{\overline{\mathbf{w}}\mathbf{w}}$ are given by
\begin{equation*}
\begin{split}
\mathcal{H}_{\overline{\mathbf{w}}\mathbf{w}} \left[ (j-1) \cdot K_{p-1}+i  ,  (b-1)\cdot K_{p-1}+a\right]
= \frac{\partial}{\partial \overline{w^{(p-1)}_{ba}}}\left( \frac{\partial E}{\partial w^{(p-1)}_{ji}}\right)^*,
\end{split}
\end{equation*}
where $j,b=1,...,K_p$ and $i,a=1,...,K_{p-1}$.

\par
First we derive an explicit formula for the entries of the Hessians $\mathcal{H}_{\mathbf{w}\mathbf{w}}$.  We start with the output layer and compute $\frac{\partial}{\partial w^{(L-1)}_{kq}}\left( \frac{\partial E}{\partial w^{(L-1)}_{lp}}\right)^*$, where $k,l=1,...,C$ and $q,p=1,...,K_{L-1}$.  Observe that if $k\neq l$, then each term $(y_{tl}-d_{tl})g_L'\left(\overline{y^{\textrm{net}}_{tl}}\right)\overline{x^{(L-1)}_{tp}}$ in the cogradient given by (\ref{eq:grad}) and (\ref{eq:deltaL}) does not depend on the weights $w^{(L-1)}_{kq}$, hence this entry of the Hessian will be $0$.  So the Hessian matrix for the output layer has a block diagonal form:
\begin{equation*}\mathcal{H}_{\mathbf{w}^{(L-1)}\mathbf{w}^{(L-1)}}=\textrm{diag} \left\{ \left[ \frac{\partial}{\partial w^{(L-1)}_{lq}}\left( \frac{\partial E}{\partial w^{(L-1)}_{lp}} \right)^* \right]_{1\le p\le K_{L-1} \atop 1\le q\le K_{L-1}} : l=1,...,C\right\}.\end{equation*}
Now:
\begin{equation}
\begin{split}
\frac{\partial}{\partial w^{(L-1)}_{lq}} &\left(  \frac{\partial E}{\partial w^{(L-1)}_{lp}} \right)^* = \frac{\partial}{\partial w^{(L-1)}_{lq}} \left[ \frac{1}{N} \sum_{t=1}^N (y_{tl}-d_{tl}) g_L'\left( \overline{y^{\textrm{net}}_{tl}} \right) \overline{x^{(L-1)}_{tp}} \right]\\
&= \frac{1}{N}\sum_{t=1}^N \left[ (y_{tl}-d_{tl}) \frac{\partial g_L'\left( \overline{y^{\textrm{net}}_{tl}}\right)}{\partial w^{(L-1)}_{lq}} + g_L'\left( \overline{y^{\textrm{net}}_{tl}} \right) \frac{\partial y_{tl}}{\partial w^{(L-1)}_{lq} } \right] \overline{x^{(L-1)}_{tp}}
\end{split}\label{eq:outcomp}
\end{equation}
where
\begin{equation*}
\frac{\partial y_{tl}}{\partial w^{(L-1)}_{lq}} = \frac{\partial y_{tl}}{\partial y^{\textrm{net}}_{tl}}\frac{\partial y^{\textrm{net}}_{tl}}{\partial w^{(L-1)}_{lq}}+\frac{\partial y_{tl}}{\partial \overline{y^{\textrm{net}}_{tl}}}\frac{\partial \overline{y^{\textrm{net}}_{tl}}}{\partial w^{(L-1)}_{lq}}=g_L'\left( y^{\textrm{net}}_{tl}\right) x^{(L-1)}_{tq}
\end{equation*}
since $g_L$ is holomorphic and therefore $\frac{\partial y_{tl}}{\partial \overline{y_{tl}^{\textrm{net}}}}=0$ (Cauchy-Riemann condition), and similarly
\begin{equation*}
\frac{\partial g_L'\left( \overline{y^{\textrm{net}}_{tl}}\right)}{\partial w^{(L-1)}_{lq}} = \frac{\partial g_L'\left( \overline{y^{\textrm{net}}_{tl}}\right)}{\partial \overline{y^{\textrm{net}}_{tl}}}\frac{\overline{\partial y^{\textrm{net}}_{tl}}}{\partial w^{(L-1)}_{lq}}+\frac{\partial g_L'\left( \overline{y^{\textrm{net}}_{tl}}\right)}{\partial y^{\textrm{net}}_{tl}}\frac{\partial y^{\textrm{net}}_{tl}}{\partial w^{(L-1)}_{lq}}=0.
\end{equation*}
Combining these two partial derivatives with (\ref{eq:outcomp}) gives the following formula for the entries of the output layer Hessian matrix:
\begin{equation}
\begin{split}
\frac{\partial}{\partial w^{(L-1)}_{kq}}&\left( \frac{\partial E}{\partial w^{(L-1)}_{lp}}\right)^*\\
&=\left\{ \begin{array}{ll}
\frac{1}{N}\sum_{t=1}^N g_L'\left( \overline{y^{\textrm{net}}_{tl}} \right)g_L'\left( y^{\textrm{net}}_{tl} \right) \overline{x^{(L-1)}_{tp}}x^{(L-1)}_{tq} &  \textrm{if }k=l,\\
0 &  \textrm{if }k\neq l.
\end{array}\right.
\end{split}
\label{eq:outputhess}
\end{equation}

\par
After updating the output layer weights, the backpropagation algorithm updates the hidden layer weights recursively.  We compute the entries of the Hessian  $\mathcal{H}_{\mathbf{w}^{(p-1)}\mathbf{w}^{(p-1)}}$ for the $(p-1)$th layer using (\ref{eq:grad}):
\begin{equation}
\begin{split}
\frac{\partial}{\partial w^{(p-1)}_{ba}}\left( \frac{\partial E}{\partial w^{(p-1)}_{ji}}\right)^* &= \frac{\partial}{\partial w^{(p-1)}_{ba}} \left[ \frac{1}{N}\sum_{t=1}^N E^{(p)}_{tj} \overline{x^{(p-1)}_{ti}} \right]\\
&= \frac{1}{N}\sum_{t=1}^N \frac{\partial E^{(p)}_{tj}}{\partial w^{(p-1)}_{ba}} \overline{x^{(p-1)}_{ti}}.
\end{split}
\label{eq:hidcomp}
\end{equation}
Applying the chain rule to (\ref{eq:deltap}), we have
\begin{equation}
\begin{split}
&\frac{\partial E^{(p)}_{tj}}{\partial w^{(p-1)}_{ba}} = \frac{\partial}{\partial w^{(p-1)}_{ba}}\left[\left( \sum_{\eta =1}^{K_{p+1}} E^{(p+1)}_{t\eta} \overline{w^{(p)}_{\eta j}}\right) g_p'\left(\overline{\left(x^{(p)}_{tj}\right)^{\textrm{net}}}\right)\right]\\
&= g_p'\left(\overline{\left(x^{(p)}_{tj}\right)^{\textrm{net}}}\right) \sum_{\eta =1}^{K_{p+1}} \frac{\partial E^{(p+1)}_{t\eta}}{\partial w^{(p-1)}_{ba}} \overline{w^{(p)}_{\eta j}}\\
&=g_p'\left(\overline{\left(x^{(p)}_{tj}\right)^{\textrm{net}}}\right) \sum_{\eta =1}^{K_{p+1}} \left[ \frac{\partial E^{(p+1)}_{t\eta}}{\partial x^{(p)}_{tb}} \frac{\partial x^{(p)}_{tb}}{\partial w^{(p-1)}_{ba}}\right.+ \left.\frac{\partial E^{(p+1)}_{t\eta}}{\partial \overline{x^{(p)}_{tb}}} \frac{\partial \overline{x^{(p)}_{tb}}}{\partial w^{(p-1)}_{ba}} \right] \overline{w^{(p)}_{\eta j}}\\
&= g_p'\left(\overline{\left(x^{(p)}_{tj}\right)^{\textrm{net}}}\right) \sum_{\eta =1}^{K_{p+1}} \frac{\partial E^{(p+1)}_{t\eta}}{\partial x^{(p)}_{tb}}  \left[ \frac{\partial x^{(p)}_{tb}}{\partial (x^{(p)}_{tb})^{\textrm{net}}}  \frac{\partial (x^{(p)}_{tb})^{\textrm{net}}}{\partial w^{(p-1)}_{ba}}\right.\\
& \hspace{40mm} \left.+ \frac{\partial x^{(p)}_{tb}}{\partial \overline{(x^{(p)}_{tb})^{\textrm{net}}}} \frac{\partial \overline{(x^{(p)}_{tb})^{\textrm{net}}}}{\partial w^{(p-1)}_{ba}} \right] \overline{w^{(p)}_{\eta j}}\\
&= g_p'\left(\overline{\left(x^{(p)}_{tj}\right)^{\textrm{net}}}\right)\sum_{\eta =1}^{K_{p+1}}\frac{\partial E^{(p+1)}_{t\eta}}{\partial x^{(p)}_{tb}} g_p'\left(\left(x^{(p)}_{tb}\right)^{\textrm{net}}\right) x^{(p-1)}_{ta} \overline{w^{(p)}_{\eta j}}.
\end{split}
\label{eq:deltaw}
\end{equation}
In the above computation, we have used the fact that $g_p$ is holomorphic and hence $\frac{\partial g'_p\left( \overline{( x^{(p)}_{tj})^{\textrm{net}}}\right)}{\partial w^{(p-1)}_{ba}}=0$ and $\frac{\partial \overline{x_{tb}^{(p)}}}{\partial w_{ba}^{(p-1)}}=0$,  $\frac{\partial x^{(p)}_{tb}}{\partial \overline{( x^{(p)}_{tb})^{\textrm{net}}}}=0$, and $\frac{\partial \overline{ (x^{(p)}_{tb})^{\textrm{net}}}}{\partial w^{(p-1)}_{ba}} =0$.  Combining  (\ref{eq:hidcomp}) and (\ref{eq:deltaw}), we have:
\begin{equation}
\begin{split}
\frac{\partial}{\partial w^{(p-1)}_{ba}}\left( \frac{\partial E}{\partial w^{(p-1)}_{ji}}\right)^* &=\frac{1}{N}\sum_{t=1}^N  \left[ \sum_{\eta=1}^{K_{p+1}} \frac{\partial E^{(p+1)}_{t\eta}}{\partial x^{(p)}_{tb}} \overline{w^{(p)}_{\eta j}} \right]\\ & \hspace{10mm} \cdot g_p'\left(\overline{\left(x^{(p)}_{tj}\right)^{\textrm{net}}}\right) g_p'\left(\left(x^{(p)}_{tb}\right)^{\textrm{net}}\right) \overline{x^{(p-1)}_{ti}} x^{(p-1)}_{ta}.
\end{split}
\label{eq:hiddeltas}
\end{equation}

\par
Next, we derive a recursive rule for finding the partial derivatives $\frac{\partial E^{(p+1)}_{t\eta}}{\partial x^{(p)}_{tb}}$.  For computational purposes, an explicit formula for $\frac{\partial E^{(L)}_{t\eta}}{\partial x^{(L-1)}_{tb}}$ is not necessary.  What we need is a recursive formula for these partial derivatives as will be apparent shortly.  Using (\ref{eq:deltap}) we have the following:
\begin{equation}
\begin{split}
\frac{\partial E^{(p+1)}_{t\eta}}{\partial x^{(p)}_{tb}} &= \frac{\partial}{\partial x^{(p)}_{tb}} \left[\left( \sum_{\alpha =1}^{K_{p+2}} E^{(p+2)}_{t\alpha} \overline{w^{(p+1)}_{\alpha \eta}}\right) g_{p+1}'\left(\overline{\left(x^{(p+1)}_{t\eta}\right)^{\textrm{net}}}\right)\right]\\
&=g_{p+1}'\left(\overline{\left(x^{(p+1)}_{t\eta}\right)^{\textrm{net}}}\right) \sum_{\alpha =1}^{K_{p+2}} \frac{\partial E^{(p+2)}_{t\alpha}}{\partial x^{(p)}_{tb}} \overline{w^{(p+1)}_{\alpha \eta}}\\
&=g_{p+1}'\left(\overline{\left(x^{(p+1)}_{t\eta}\right)^{\textrm{net}}}\right) \sum_{\alpha =1}^{K_{p+2}}\sum_{\beta=1}^{K_{p+1}} \left[ \frac{\partial E^{(p+2)}_{t\alpha}}{\partial x^{(p+1)}_{t\beta}} \frac{\partial x^{(p+1)}_{t\beta}}{\partial x^{(p)}_{tb}}\right.\\
& \hspace{40mm} \left.+\frac{\partial E^{(p+2)}_{t\alpha}}{\partial \overline{x^{(p+1)}_{t\beta}}} \frac{\partial \overline{x^{(p+1)}_{t\beta}}}{\partial x^{(p)}_{tb}} \right] \overline{w^{(p+1)}_{\alpha \eta}}\\
\end{split}
\label{eq:deltapbyx}
\end{equation}
\begin{equation*}
\begin{split}
&=g_{p+1}'\left(\overline{\left(x^{(p+1)}_{t\eta}\right)^{\textrm{net}}}\right) \sum_{\alpha =1}^{K_{p+2}}\sum_{\beta=1}^{K_{p+1}}\frac{\partial E^{(p+2)}_{t\alpha}}{\partial x^{(p+1)}_{t\beta}}\\
& \hspace{3mm} \cdot\left[ \frac{\partial x^{(p+1)}_{t\beta}}{\partial (x^{(p+1)}_{t\beta})^{\textrm{net}}} \frac{\partial (x^{(p+1)}_{t\beta})^{\textrm{net}}}{\partial x^{(p)}_{tb}}+\frac{\partial x^{(p+1)}_{t\beta}}{\partial \overline{(x^{(p+1)}_{t\beta})^{\textrm{net}}}} \frac{\partial \overline{(x^{(p+1)}_{t\beta})^{\textrm{net}}}}{\partial x^{(p)}_{tb}} \right] \overline{w^{(p+1)}_{\alpha \eta}}\\
&=\sum_{\beta=1}^{K_{p+1}}\left[\sum_{\alpha =1}^{K_{p+2}}\frac{\partial E^{(p+2)}_{t\alpha}}{\partial x^{(p+1)}_{t\beta}} \overline{w^{(p+1)}_{\alpha \eta}}\right]\\
& \hspace{20mm} \cdot  g_{p+1}'\left(\overline{\left(x^{(p+1)}_{t\eta}\right)^{\textrm{net}}}\right)g_{p+1}'\left(\left(x^{(p+1)}_{t\beta}\right)^{\textrm{net}}\right)w^{(p)}_{\beta b}.
\end{split}
\end{equation*}
This gives a recursive formula for computing the partial derivatives $\frac{\partial E^{(p+1)}_{t\eta}}{\partial x^{(p)}_{tb}}$.  We will combine the above calculations to give a more concise recursive algorithm for computing the entries of the matrices $\mathcal{H}_{\mathbf{w}\mathbf{w}}$ in Theorem \ref{thm:complexnewtbackprop}, below.

\par

Next we consider the Hessians $\mathcal{H}_{\overline{\mathbf{w}}\mathbf{w}}$.  Again we start with the output layer and compute $\frac{\partial}{\partial \overline{w^{(L-1)}_{kq}}} \left( \frac{\partial E}{\partial w^{(L-1)}_{lp}} \right)^*$.  Using the fact that $\frac{\partial E}{\partial w^{(L-1)}_{lp}}$ does not depend on $\overline{w^{(L-1)}_{kq}}$ if $k\neq l$, we see that the output layer Hessian $\mathcal{H}_{\overline{\mathbf{w}^{(L-1)}}\mathbf{w}^{(L-1)}}$ is also block diagonal with blocks
\begin{eqnarray*}
\left[ \frac{\partial}{\partial \overline{w^{(L-1)}_{lq}}} \left( \frac{\partial E}{\partial w^{(L-1)}_{lp}} \right)^*\right]_{1\le p\le K_{L-1} \atop 1\le q\le K_{L-1}}
\end{eqnarray*}
for $l=1,...,C$.  Computing the entries in these blocks,
\begin{equation*}
\begin{split}
\frac{\partial}{\partial \overline{w^{(L-1)}_{lq}}} & \left( \frac{\partial E}{\partial w^{(L-1)}_{lp}} \right)^* = \frac{\partial}{\partial \overline{w^{(L-1)}_{lq}}} \left[ \frac{1}{N} \sum_{t=1}^N (y_{tl}-d_{tl}) g_L'\left(\overline{y^{\textrm{net}}_{tl}}\right) \overline{x^{(L-1)}_{tp}}\right]\\
&= \frac{1}{N} \sum_{t=1}^N \left[ (y_{tl}-d_{tl})\frac{\partial g_L'\left(\overline{y^{\textrm{net}}_{tl}}\right)}{\partial \overline{w^{(L-1)}_{lq}}}+ g_L'\left(\overline{y^{\textrm{net}}_{tl}}\right) \frac{\partial y_{tl}}{\partial \overline{w^{(L-1)}_{lq}}} \right]\overline{x^{(L-1)}_{tp}}
\end{split}
\label{eq:outconjcomp}
\end{equation*}
where $\frac{\partial y_{tl}}{\partial \overline{w^{(L-1)}_{lq}}} =0$, and
\begin{eqnarray*}
\begin{split}
\frac{\partial g_L'\left(\overline{y^{\textrm{net}}_{tl}}\right)}{\partial \overline{w^{(L-1)}_{lq}}} &=\frac{\partial g_L'\left(\overline{y^{\textrm{net}}_{tl}}\right)}{\partial \overline{y_{tl}^{\textrm{net}}}} \frac{\partial \overline{y_{tl}^{\textrm{net}}}}{\partial \overline{w^{(L-1)}_{lq}}}+\frac{\partial g_L'\left(\overline{y^{\textrm{net}}_{tl}}\right)}{\partial y_{tl}^{\textrm{net}}} \frac{\partial y_{tl}^{\textrm{net}}}{\partial \overline{w^{(L-1)}_{lq}}}\\
&= g_L''\left(\overline{y_{tl}^{\textrm{net}}}\right)\overline{x^{(L-1)}_{tq}}.
\end{split}
\end{eqnarray*}
Thus:
\begin{equation}
\begin{split}
\frac{\partial}{\partial \overline{w^{(L-1)}_{kq}}} & \left( \frac{\partial E}{\partial w^{(L-1)}_{lp}} \right)^*\\ &=\left\{ \begin{array}{ll} \frac{1}{N}\sum_{t=1}^N (y_{tl}-d_{tl}) g_L''\left(\overline{y_{tl}^{\textrm{net}}}\right)\overline{x^{(L-1)}_{tq}}\overline{x^{(L-1)}_{tp}} & \textrm{if } k=l,\\
0 & \textrm{if }k\neq l. \end{array}\right.
\end{split}
\label{eq:outconj}
\end{equation}

\par

The entries of the Hessian $\mathcal{H}_{\overline{\mathbf{w}^{(p-1)}}\mathbf{w}^{(p-1)}}$ for the $(p-1)$th layer can be computed similarly.  We record the formula here and provide the detailed computations in Appendix A.  We have:
\begin{equation}
\begin{split}
&\frac{\partial}{\partial \overline{w^{(p-1)}_{ba}}} \left( \frac{\partial E}{\partial w^{(p-1)}_{ji}} \right)^*\\
&=\left\{ \begin{array}{l} 
\frac{1}{N} \sum_{t=1}^N \left\{ \left[ \sum_{\eta=1}^{K_{p+1}}\frac{\partial E^{(p+1)}_{t\eta}}{\partial \overline{x^{(p)}_{tb}}} \overline{w^{(p)}_{\eta j}} \right] g_p'( \overline{(x^{(p)}_{tj})^{\textrm{net}}})g_p'( \overline{(x^{(p)}_{tb})^{\textrm{net}}}) \right.\\
\hspace{10mm} \left. +\left[\sum_{\eta=1}^{K_{p+1}} E_{t\eta}^{(p+1)} w^{(p)}_{\eta j} \right] g_p''( \overline{(x^{(p)}_{tj})^{\textrm{net}}}) \right\} \overline{x^{(p-1)}_{ti}}\overline{x^{(p-1)}_{ta}}\textrm{ if }j=b,\\
\frac{1}{N} \sum_{t=1}^N \left\{ \left[ \sum_{\eta=1}^{K_{p+1}}\frac{\partial E^{(p+1)}_{t\eta}}{\partial \overline{x^{(p)}_{tb}}} \overline{w^{(p)}_{\eta j}} \right] g_p'( \overline{(x^{(p)}_{tj})^{\textrm{net}}})g_p'( \overline{(x^{(p)}_{tb})^{\textrm{net}}}) \right\}\\
\hspace{70mm}\cdot \overline{x^{(p-1)}_{ti}}\overline{x^{(p-1)}_{ta}} \hspace{3.5mm}\textrm{if }j\neq b,\\
\end{array}\right.
\end{split}
\label{eq:hidconjdeltas}
\end{equation}
where $j,b=1,...,K_p$ and $i,a=1,...,K_{p+1}$, and the partial derivatives $\frac{\partial E^{(p+1)}_{t\eta}}{\partial \overline{x^{(p)}_{tb}}}$ are given recursively by
\begin{equation}
\begin{split}
\frac{\partial E^{(p+1)}_{t\eta}}{\partial \overline{x^{(p)}_{tb}}} &=\sum_{\beta=1}^{K_{p+1}} \left[ \sum_{\alpha=1}^{K_{p+2}} \frac{\partial E^{(p+2)}_{t\alpha}}{\partial \overline{x^{(p+1)}_{t\beta}}}\overline{w^{(p+1)}_{\alpha \eta}} \right]\\
&\hspace{20mm}\cdot g'_{p+1}\left(\overline{\left( x^{(p+1)}_{t\eta}\right)^{\textrm{net}}}\right) g'_{p+1}\left(\overline{\left( x^{(p+1)}_{t\beta}\right)^{\textrm{net}}}\right) \overline{w^{(p)}_{\beta b}}\\
&\hspace{10mm}+\left[\sum_{\alpha=1}^{K_{p+2}} E^{(p+2)}_{t\alpha} \overline{w^{(p+1)}_{\alpha \eta}} \right]g''_{p+1}\left(\overline{\left( x^{(p+1)}_{t\eta}\right)^{\textrm{net}}}\right) \overline{w^{(p)}_{\eta b}}.
\end{split}
\label{eq:deltapbyxconj}
\end{equation}

\par
We now summarize the formulas we have derived in the following theorem.

\begin{theorem}[Newton Backpropagation Algorithm for Holomorphic Neural Networks] The weight updates for the holomorphic MLPs with activation functions satisfying $$\overline{g(z)}=g(\overline{z}),$$ $p=1,...,L$, using the backpropagation algorithm with Newton's method are given by
\begin{equation}
\begin{split}
\Delta\mathbf{w}^{(p-1)}
& = \left( \mathcal{H}_{\mathbf{w}^{(p-1)}\mathbf{w}^{(p-1)}} - \mathcal{H}_{\overline{\mathbf{w}^{(p-1)}}\mathbf{w}^{(p-1)}}\mathcal{H}_{\overline{\mathbf{w}^{(p-1)}}\overline{\mathbf{w}^{(p-1)}}}^{-1}\mathcal{H}_{\mathbf{w}^{(p-1)}\overline{\mathbf{w}^{(p-1)}}} \right)^{-1}\\
&\hspace{5mm}\cdot\left[ \mathcal{H}_{\overline{\mathbf{w}^{(p-1)}}\mathbf{w}^{(p-1)}}\mathcal{H}_{\overline{\mathbf{w}^{(p-1)}}\overline{\mathbf{w}^{(p-1)}}}^{-1} \left( \frac{\partial E}{\partial \overline{\mathbf{w}^{(p-1)}}} \right)^* - \left( \frac{\partial E}{\partial \mathbf{w}^{(p-1)}} \right)^* \right],
\end{split}
\label{eq:NewUpThm}
\end{equation}
where:
\begin{enumerate}
\item the entries of the Hessian matrices $\mathcal{H}_{\mathbf{w}^{(p-1)}\mathbf{w}^{(p-1)}}$ for $p=1,...,L$ are given by
\begin{equation}
\frac{\partial}{\partial w^{(p-1)}_{ba}}\left( \frac{\partial E}{\partial w^{(p-1)}_{ji}}\right)^* = \frac{1}{N} \sum_{t=1}^N \gamma^{(p)}_{tjb} \overline{x^{(p-1)}_{ti}} x^{(p-1)}_{ta}
\label{eq:hessp}
\end{equation}
for $j,b=1,...,K_p$ and $i,a=1,...,K_{p-1}$, where the $\gamma^{(p)}_{tjb}$ are defined for $t=1,...,N$ recursively on $p$ by
\begin{equation*}
\gamma^{(L)}_{tkl} = \left\{ \begin{array}{ll} 
g_L'(\overline{y^{\textrm{net}}_{tl}})g_L'(y^{\textrm{net}}_{tl}) & \textrm{if } k=l,\\
0 & \textrm{if } k\neq l,   \end{array}\right.
\label{eq:gammaL}
\end{equation*}
for $k,l=1,...,C$, and for $p=1,...,L-1$,
\begin{equation}
\gamma^{(p)}_{tjb} = \left[ \sum_{\eta=1}^{K_{p+1}} \sum_{\beta=1}^{K_{p+1}} \gamma^{(p+1)}_{t\eta \beta} \overline{w^{(p)}_{\eta j}}w^{(p)}_{\beta b}\right] g_p'\left( \overline{(x^{(p)}_{tj})^{\textrm{net}}} \right)g_p'\left( (x^{(p)}_{tb})^{\textrm{net}} \right)
\label{eq:gammap}
\end{equation}
for $j,b=1,...,K_{p+1}$,

\item the entries of the Hessian matrices $\mathcal{H}_{\overline{\mathbf{w}^{(p-1)}}\mathbf{w}^{(p-1)}}$ for $p=1,...,L$ are given by
\begin{equation}
\frac{\partial}{\partial \overline{w^{(p-1)}_{ba}}} \left( \frac{\partial E}{\partial w^{(p-1)}_{ji}}\right)^* = \frac{1}{N} \sum_{t=1}^N \left( \psi^{(p)}_{tjb} +\theta^{(p)}_{tjb}\right) \overline{x^{(p-1)}_{ti}}\overline{x^{(p-1)}_{ta}}
\label{eq:conjhessp}
\end{equation}
for $j,b=1,...,K_p$ and $i,a=1,...,K_{p-1}$, where the $\theta^{(p)}_{tjb}$ are defined for $t=1,...,N$ by 
\begin{equation*}
\theta^{(L)}_{tkl}=\left\{ \begin{array}{ll} (y_{tl}-d_{tl})g_L''\left(\overline{y_{tl}^{\textrm{net}}} \right) & \textrm{if }k=l, \\
0 & \textrm{if }k\neq l, \end{array}\right.
\label{thetaL}
\end{equation*}
for $k,l=1,...,C$, and for $p=1,...,L-1$, 
\begin{equation}
\theta^{(p)}_{tjb} =\left\{ \begin{array}{ll} \left[ \sum_{\eta=1}^{K_{p+1}} E^{(p+1)}_{t\eta} w^{(p)}_{\eta j} \right] g_p'' \left( \overline{\left( x^{(p)}_{tj}\right)^{\textrm{net}}}\right) & \textrm{if } j=b, \\
0 & \textrm{if } j\neq b, \end{array}\right.
\label{eq:thetap}
\end{equation}
for $j,b=1,...,K_{p+1}$, where the $E^{(p)}_{t\eta}$ are given by (\ref{eq:deltaL}) and (\ref{eq:deltap}), and the $\psi^{(p)}_{tjb}$ are defined for $t=1,...,N$ recursively on $p$ by $\psi^{(L)}_{tkl}=0$ for $k,l=1,...,C$, and for $p=1,...,L-1$, 
\begin{equation}
\begin{split}
\psi^{(p)}_{tjb}=\left[ \sum_{\eta=1}^{K_{p+1}}\sum_{\beta=1}^{K_{p+1}} \left( \psi^{(p+1)}_{t\eta\beta} \overline{w^{(p)}_{\beta b}}\right.\right. &+\left.\left.\theta^{(p+1)}_{t\eta\beta} \overline{w^{(p)}_{\eta b}}\right)\overline{w^{(p)}_{\eta j}}  \right]\\ &\cdot g_p'\left(\overline{\left(x^{(p)}_{tj}\right)^{\textrm{net}}} \right) g_p'\left(\overline{\left(x^{(p)}_{tb}\right)^{\textrm{net}}} \right)
\end{split}
\label{eq:psip}
\end{equation}
for $j,b=1,...,K_{p+1}$, and

\item for the other two Hessian matrices we have $\mathcal{H}_{\mathbf{w}^{(p-1)}\overline{\mathbf{w}^{(p-1)}}}=\overline{\mathcal{H}_{\overline{\mathbf{w}^{(p-1)}}\mathbf{w}^{(p-1)}}}$ and $\mathcal{H}_{\overline{\mathbf{w}^{(p-1)}}\overline{\mathbf{w}^{(p-1)}}} = \overline{\mathcal{H}_{\mathbf{w}^{(p-1)}\mathbf{w}^{(p-1)}}}.$
\end{enumerate}
\label{thm:complexnewtbackprop}
\end{theorem}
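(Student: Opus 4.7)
The plan is to verify each part of the theorem by consolidating the computations already performed in the preceding derivation. The update rule (\ref{eq:NewUpThm}) is just (\ref{eq:NewUp}) applied to each layer's weight block $\mathbf{w}^{(p-1)}$, so nothing additional is required once we have the recursive formulas for the Hessian entries. Likewise, part (3) was already noted right after (\ref{eq:hessdef}) and follows from the reality of the error function $E$: since $E$ is real-valued and the activation functions satisfy $\overline{g_p(z)}=g_p(\overline{z})$, Wirtinger calculus gives $\partial E/\partial \overline{\mathbf{w}}=\overline{\partial E/\partial \mathbf{w}}$, and iterating this identity at the second-order level yields $\mathcal{H}_{\mathbf{w}\overline{\mathbf{w}}}=\overline{\mathcal{H}_{\overline{\mathbf{w}}\mathbf{w}}}$ and $\mathcal{H}_{\overline{\mathbf{w}}\overline{\mathbf{w}}}=\overline{\mathcal{H}_{\mathbf{w}\mathbf{w}}}$. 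The real task is therefore to repackage the two direct derivations into the recursive definitions of $\gamma^{(p)}$, $\theta^{(p)}$, and $\psi^{(p)}$.

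For part (1), I would observe that the output-layer computation (\ref{eq:outputhess}) already matches the form of (\ref{eq:hessp}) under the identification of the bracket with $\gamma^{(L)}_{tkl}$. For a general hidden layer, I start from (\ref{eq:hidcomp}) and (\ref{eq:hiddeltas}), which already present the Hessian entry as $\frac{1}{N}\sum_t [\,\cdot\,]\,\overline{x^{(p-1)}_{ti}}\,x^{(p-1)}_{ta}$; setting $\gamma^{(p)}_{tjb}$ equal to the bracketed factor reproduces (\ref{eq:hessp}). To obtain the recursion (\ref{eq:gammap}), I substitute the chain-rule expansion (\ref{eq:deltapbyx}) for $\partial E^{(p+1)}_{t\eta}/\partial x^{(p)}_{tb}$ into the bracket; after rearranging, the inner double sum collapses to $\sum_{\eta,\beta}\gamma^{(p+1)}_{t\eta\beta}\,\overline{w^{(p)}_{\eta j}}\,w^{(p)}_{\beta b}$ multiplied by the two outer $g_p'$ factors, exactly as stated.

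For part (2), the output-layer formula (\ref{eq:outconj}) already exhibits the $k=l$ vs.\ $k\neq l$ split driven by $g_L''$, which I identify with $\theta^{(L)}_{tkl}$, and I set $\psi^{(L)}_{tkl}=0$ since there is no propagated piece yet. For a hidden layer, (\ref{eq:hidconjdeltas}) naturally separates the $j=b$ case into two pieces paralleling the two terms on the right of (\ref{eq:deltapbyxconj}): the piece carrying $\partial E^{(p+1)}_{t\eta}/\partial\overline{x^{(p)}_{tb}}$ with two $g'_p$ factors becomes $\psi^{(p)}_{tjb}$, and the diagonal piece driven by $g_p''$ becomes $\theta^{(p)}_{tjb}$ once the definition (\ref{eq:deltap}) of $E^{(p+1)}_{t\eta}$ is substituted. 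The recursion (\ref{eq:psip}) then follows by inserting (\ref{eq:deltapbyxconj}) into the $\psi^{(p)}$ expression and matching the two resulting sums against $\psi^{(p+1)}$ and $\theta^{(p+1)}$. I expect the main obstacle to be purely bookkeeping: aligning the four weight indices and distinguishing $g_p'$ evaluated at a net-sum from $g_p'$ at its conjugate, so that every contribution lands in the correct bundle; once the bundles $\gamma$, $\psi$, $\theta$ are fixed as above, each remaining step is a mechanical application of the complex chain rule.
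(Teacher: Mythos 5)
Your proposal matches the paper's own proof essentially step for step: part (1) is obtained by identifying $\gamma^{(p)}_{tjb}$ with the bracketed factor in the already-derived Hessian entry and substituting the chain-rule recursion for $\partial E^{(p+1)}_{t\eta}/\partial x^{(p)}_{tb}$, and part (2) is obtained by the analogous splitting into the $\psi$ and $\theta$ bundles and inserting the recursion for $\partial E^{(p+1)}_{t\eta}/\partial \overline{x^{(p)}_{tb}}$, exactly as the paper does. Your slightly more explicit justification of part (3) via the Wirtinger identity $\partial E/\partial\overline{\mathbf{w}}=\overline{\partial E/\partial\mathbf{w}}$ for real-valued $E$ is consistent with the remark the paper makes after introducing the Hessians, so there is nothing to correct.
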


\begin{proof}\begin{enumerate} \item Setting $\gamma^{(L)}_{tkl}$ as defined above, Equation (\ref{eq:hessp}) follows immediately from (\ref{eq:outputhess}).  For the hidden layer Hessian matrix entries, set 
\begin{equation}
\gamma^{(p)}_{tjb} = \left[\sum_{\eta=1}^{K_{p+1}} \frac{\partial E^{(p+1)}_{t\eta}}{\partial x^{(p)}_{tb}} \overline{w^{(p)}_{\eta j}}\right] g_p'\left( \overline{(x^{(p)}_{tj})^{\textrm{net}}} \right)g_p'\left( (x^{(p)}_{tb})^{\textrm{net}} \right)
\label{eq:proof1a}
\end{equation}
in (\ref{eq:hiddeltas}), giving us (\ref{eq:hessp}).  Then using (\ref{eq:deltapbyx}) we have 
\begin{equation} 
\frac{\partial E_{t\eta}^{(p+1)}}{\partial x_{tb}^{(p)}} = \sum_{\beta=1}^{K_{p+1}} \gamma^{(p+1)}_{t\eta\beta} w^{(p)}_{\beta b}.
\label{eq:proof1b} \end{equation}
So substituting (\ref{eq:proof1b}) into (\ref{eq:proof1a}) we get the recursive formula (\ref{eq:gammap}).

\item The formula (\ref{eq:conjhessp}) for $p=L$ follows directly from  the way we defined  $\theta^{(L)}_{tkl}$, $\psi^{(L)}_{tkl}$, and equation (\ref{eq:outconj}).  Next, define the $\theta^{(p)}_{tjb}$ as above, and set
\begin{equation}
\psi^{(p)}_{tjb}=\left[ \sum_{\eta=1}^{K_{p+1}} \frac{\partial E^{(p+1)}_{t\eta}}{\partial \overline{x^{(p)}_{tb}}} \overline{w^{(p)}_{\eta j}}\right]g_p'\left(\overline{\left(x^{(p)}_{tj}\right)^{\textrm{net}}} \right)g_p'\left(\overline{\left(x^{(p)}_{tb}\right)^{\textrm{net}}} \right)
\label{eq:proof2a}
\end{equation}
in (\ref{eq:hidconjdeltas}).  Substituting (\ref{eq:proof2a}) and (\ref{eq:thetap}) into (\ref{eq:hidconjdeltas}) gives us (\ref{eq:conjhessp}). For the $\psi^{(p)}_{tjb}$, using (\ref{eq:deltapbyxconj}) with our definition of the $\psi^{(p)}_{tjb}$ in (\ref{eq:proof2a}) we have:
\begin{equation}
\frac{\partial E^{(p+1)}_{t\eta}}{\partial \overline{x^{(p)}_{tb}}}=\sum_{\beta=1}^{K_{p+1}} \left( \psi^{(p+1)}_{t\eta\beta} \overline{w^{(p)}_{\beta b}}+\theta^{(p+1)}_{t\eta\beta} \overline{w^{(p)}_{\eta b}}\right)
\label{eq:proof2b}
\end{equation}
so substituting (\ref{eq:proof2b}) into (\ref{eq:proof2a}) we get (\ref{eq:psip}).\end{enumerate} \end{proof}

\section{Backpropagation Using the Pseudo-Newton's Method}

To simplify the computation in the implementation of Newton's method, we can use the pseudo-Newton algorithm, which is an alternative algorithm also known to provide good quadratic convergence.  For the pseudo-Newton algorithm, we take $\mathcal{H}_{\overline{\mathbf{w}^{(p-1)}}\mathbf{w}^{(p-1)}}=0=\mathcal{H}_{\mathbf{w}^{(p-1)}\overline{\mathbf{w}^{(p-1)}}}$ in (\ref{eq:NewUpThm}), thus reducing the weight updates to
\begin{equation*} \Delta \mathbf{w}^{(p-1)} = -\mathcal{H}_{\mathbf{w}^{(p-1)}\mathbf{w}^{(p-1)}}^{-1} \left( \frac{\partial E}{\partial \mathbf{w}^{(p-1)}} \right)^*. \label{eq:PseudUp}\end{equation*}
Convergence using the pseudo-Newton algorithm will generally be faster than gradient descent.  The trade off for computational efficiency over Newton's method is somewhat slower convergence, though if the activation functions in the holomorphic MLP are in addition onto, the performance of the pseudo-Newton versus Newton algorithms should be similar  \cite{Kreutz-Delgado2009}.

\begin{corollary}[Pseudo-Newton Backpropagation Algorithm for Holomorphic Neural Networks]
The weight updates for the holomorphic MLP with activation functions satisfying $$\overline{g(z)}=g(\overline{z}),$$ $p=1,...,L$, using the backpropagation algorithm with the pseudo-Newton's method are given by
\begin{equation*} \Delta \mathbf{w}^{(p-1)} = -\mathcal{H}_{\mathbf{w}^{(p-1)}\mathbf{w}^{(p-1)}}^{-1} \left( \frac{\partial E}{\partial \mathbf{w}^{(p-1)}} \right)^*, \label{eq:PseudUpCor}\end{equation*}
where the entries of the Hessian matrices $\mathcal{H}_{\mathbf{w}^{(p-1)}\mathbf{w}^{(p-1)}}$ for $1\le p\le L$ are given by (\ref{eq:hessp}) in Theorem \ref{thm:complexnewtbackprop}. \label{cor:complexp-newtbackprop} \end{corollary}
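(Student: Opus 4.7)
The plan is to specialize Theorem \ref{thm:complexnewtbackprop} under the pseudo-Newton assumption that the cross-Hessians vanish, and then inherit the Hessian entry formulas from part (1) of that theorem. Since the cross-Hessian approximation is imposed as a definition of the pseudo-Newton method rather than derived, nothing new needs to be shown about the $\mathcal{H}_{\overline{\mathbf{w}}\mathbf{w}}$ matrices; the entire task reduces to a direct algebraic simplification of the weight update.

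First I would substitute $\mathcal{H}_{\overline{\mathbf{w}^{(p-1)}}\mathbf{w}^{(p-1)}}=0$ and $\mathcal{H}_{\mathbf{w}^{(p-1)}\overline{\mathbf{w}^{(p-1)}}}=0$ into formula (\ref{eq:NewUpThm}). In the leading inverse, the subtracted product $\mathcal{H}_{\overline{\mathbf{w}^{(p-1)}}\mathbf{w}^{(p-1)}}\mathcal{H}_{\overline{\mathbf{w}^{(p-1)}}\overline{\mathbf{w}^{(p-1)}}}^{-1}\mathcal{H}_{\mathbf{w}^{(p-1)}\overline{\mathbf{w}^{(p-1)}}}$ collapses to zero because both outer factors are zero, leaving just $\mathcal{H}_{\mathbf{w}^{(p-1)}\mathbf{w}^{(p-1)}}^{-1}$ as the prefactor. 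In the bracketed expression, the first summand $\mathcal{H}_{\overline{\mathbf{w}^{(p-1)}}\mathbf{w}^{(p-1)}}\mathcal{H}_{\overline{\mathbf{w}^{(p-1)}}\overline{\mathbf{w}^{(p-1)}}}^{-1}\left(\frac{\partial E}{\partial \overline{\mathbf{w}^{(p-1)}}}\right)^*$ vanishes for the same reason, leaving only $-\left(\frac{\partial E}{\partial \mathbf{w}^{(p-1)}}\right)^*$. Combining these two reductions immediately yields the stated formula $\Delta \mathbf{w}^{(p-1)} = -\mathcal{H}_{\mathbf{w}^{(p-1)}\mathbf{w}^{(p-1)}}^{-1} \left( \frac{\partial E}{\partial \mathbf{w}^{(p-1)}} \right)^*$.

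For the final assertion, the explicit recursive expressions for the entries of $\mathcal{H}_{\mathbf{w}^{(p-1)}\mathbf{w}^{(p-1)}}$ are exactly those in part (1) of Theorem \ref{thm:complexnewtbackprop}, since the derivation of (\ref{eq:hessp}), (\ref{eq:gammap}), and the base case for $\gamma^{(L)}_{tkl}$ makes no use whatsoever of the cross-Hessians $\mathcal{H}_{\overline{\mathbf{w}}\mathbf{w}}$. Hence they carry over verbatim.

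There is essentially no obstacle: this corollary is a one-line specialization. The only implicit requirement is that $\mathcal{H}_{\mathbf{w}^{(p-1)}\mathbf{w}^{(p-1)}}$ be invertible at the current iterate, which is the standard precondition for any Newton-type update and is already inherited from Theorem \ref{thm:complexnewtbackprop}; no additional nondegeneracy argument is needed.
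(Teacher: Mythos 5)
Your proposal is correct and follows essentially the same route as the paper, which likewise obtains the corollary by setting $\mathcal{H}_{\overline{\mathbf{w}^{(p-1)}}\mathbf{w}^{(p-1)}}=\mathcal{H}_{\mathbf{w}^{(p-1)}\overline{\mathbf{w}^{(p-1)}}}=0$ in the Newton update of Theorem \ref{thm:complexnewtbackprop} and carrying over the $\mathcal{H}_{\mathbf{w}^{(p-1)}\mathbf{w}^{(p-1)}}$ entries unchanged from part (1). No gap; the paper treats this as an immediate specialization, exactly as you do.
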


\section{The One-Step Newton Steplength Algorithm for Real-Valued Complex Functions}

\par
A significant problem encountered with Newton's method and other minimization algorithms is the tendency of the iterates to ``overshoot.'' If this happens, the iterates may not decrease the function value at each step \cite{Ortega1970}.  For functions on real domains, it is known that for any minimization algorithm, careful choice of the sequence of steplengths via various steplength algorithms will guarantee a descent method.  Steplength algorithms for minimization of real-valued functions on complex domains have been discussed in the literature \cite{Ang2001,Hanna2003,Manton2002,Sorber2011}.  In \cite{Manton2002}, the problem was addressed by imposing unitary conditions on the input vectors.  In \cite{Sorber2011}, steplength algorithms were proposed for the BFGS method, which is an approximation to Newton's method.  With regard to applications in neural networks, variable steplength algorithms exist for least mean square error algorithms, and these algorithms have been adapted to the gradient descent backpropagation algorithm for fully complex-valued neural networks with analytic activation functions \cite{Ang2001,Goh2005}.  Fully adaptive gradient descent algorithms for complex-valued neural networks have also been proposed \cite{Hanna2003}.  However, these algorithms do not apply to the Newton backpropagation algorithm.

\par
To provide a steplength algorithm that guarantees convergence of Newton's method for real-valued complex functions, we need the following definitions.  Let $f:\Omega\subseteq\C^k\to\R$.  The function $f$ is called real differentiable ($\R$-differentiable) if it is (Frechet) differentiable as a mapping
$$f(\x,\y):D:=\left\{ \left( \begin{array}{cc} \x \\ \y\end{array}\right)\in\R^{2k} \left\vert \begin{array}{cc} \x,\y\in\R^k \\ \z=\x+i\y\in\Omega \end{array} \right.\right\} \R^{2k}\to\R.$$
We then define a stationary point of $f$ to be a stationary point in the sense of the function $f(\x,\y):D\subseteq\R^{2k}\to\R.$ If $f$ is twice $\R$-differentiable, let $\mathcal{H}_{\z\z}$ and $\mathcal{H}_{\overline{\z}\z}$ denote the Hessian matrices of $f$ with respect to $\z$ given by (\ref{eq:hessdef}).

\par
Let $\z(0)\in\Omega$.  If $\Omega$ is open, we define the level set of $\z(0)$ under $f$ on $\Omega$ to be
\begin{equation}
L_{\C^k}(f(\z(0)))= \left\{ \z\in\Omega \, \vert \, f(\z)\le f(\z(0)) \right\},
\label{eq:levelset}
\end{equation}
and let $L_{\C^k}^0(f(\z(0)))$ be the path-connected component of $L_{\C^k}(f(\z(0)))$ containing $\z(0)$.  To discuss rate of convergence, recall that the root-convergence factors (R-factors) of a sequence $\{\z(n)\}\subseteq\C^k$ that converges to $ \hat{\z}\in\C^k$ are
\begin{equation}
R_p\{ \z(n)\} = \left\{ \begin{array}{ll} \limsup_{n\to\infty} \Vert \z(n)- \hat{\z}\Vert^{1/n}_{\C^k} & \textrm{if }p=1, \\ \limsup_{n\to\infty} \Vert \z(n)- \hat{\z}\Vert^{1/p^n}_{\C^k} & \textrm{if } p>1, \end{array}\right.
\label{eq:Rfactors}
\end{equation}
and the sequence is said to have at least an R-linear rate of convergence if $R_1\{ \z(n)\} <1$.

The following theorem gives the one-step Newton steplength algorithm to adjust the sequence of steplengths for minimization of a real-valued complex function using Newton's method.  We provide the detailed proof in Appendix B.

\begin{theorem}[Convergence of the Complex Newton Algorithm with Complex One-Step Newton Steplengths]
Let $f:\Omega\subseteq\C^k\to\R$ be twice-continuously $\R$-differentiable on the open convex set $\Omega$ and assume that $ L^0_{\C^k}(f(\z(0)))$ is compact for $\z(0)\in\Omega$.  Suppose for all $\z\in\Omega$,
$$\mathrm{Re}\{ \mathbf{h}^* \mathcal{H}_{\z\z}(\z)\mathbf{h} + \mathbf{h}^* \mathcal{H}_{\overline{\z}\z}(\z)\overline{\mathbf{h}}\}>0 \textrm{ for all }\mathbf{h}\in\C^k.$$
Assume $f$ has a unique stationary point $\hat{\z}\in L^0_{\C^k}(f(\z(0)))$, and fix $\epsilon\in (0,1]$.  Consider the iteration 
\begin{equation}
\z(n+1)=\z(n)-\omega(n)\mu(n)\p(n), \textrm{ } n=0,1,...,
\label{eq:iteration}
\end{equation}
where the $\p(n)$ are the nonzero complex Newton updates
\begin{equation}
\begin{split}
\p(\z(n))= &-\left[\mathcal{H}_{\z\z}(\z(n))-\mathcal{H}_{\overline{\z}\z}(\z(n))\mathcal{H}_{\overline{\z}\overline{\z}}(\z(n))^{-1}\mathcal{H}_{\z\overline{\z}}(\z(n))\right]^{-1}\\
& \cdot\left[ \mathcal{H}_{\overline{\z}\z}(\z(n))\mathcal{H}_{\overline{\z}\overline{\z}}(\z(n))^{-1} \left(\frac{\partial f}{\partial \overline{\z}}(\z(n)) \right)^*-\left(\frac{\partial f}{\partial \z}(\z(n)) \right)^*\right],
\end{split}
\label{eq:Newtupdatesone-step}
\end{equation}
the steplengths $\mu(n)$ are given by
\begin{equation*}
\mu(n)=\frac{\mathrm{Re} \{ \frac{\partial f}{\partial \z} (\z(n))\p(n)\}}{\mathrm{Re}\{ \p(n)^* \mathcal{H}_{\z\z}(\z(n))\p(n) + \p(n)^*\mathcal{H}_{\overline{\z}\z}(\z(n))\overline{\p(n)}\}},
\end{equation*}\normalsize
and the underrelaxation factors $\omega(n)$ satisfy
\begin{equation}
0\le \epsilon \le \omega(n) \le \frac{2}{\gamma(n)}-\epsilon,
\label{eq:OmegaDef}
\end{equation}
where, taking $\z=\z(n)$ and $\p=\p(n)$,
\begin{equation}
\begin{split}
\gamma(n)&=\sup \left. \left\{  \frac{\mathrm{Re}\{\p^* \mathcal{H}_{\z\z}(\z-\mu\p)\p + \p^* \mathcal{H}_{\overline{\z}\z}(\z-\mu\p)\overline{\p}\}}{\mathrm{Re}\{\p^* \mathcal{H}_{\z\z}(\z)\p + \p^* \mathcal{H}_{\overline{\z}\z}(\z)\overline{\p}\}} \right. \right\vert \\
&\hspace{50mm} \left.\begin{array}{c} \mu>0, \textrm{ } f(\z-\nu\p)<f(\z)\\
\textrm{for all }\nu\in(0,\mu]\end{array}\right\}.
\end{split}
\label{eq:GammaOmegaDef}
\end{equation}
Then $\lim_{n\to\infty}\z(n)=\hat{\z},$ and the rate of convergence is at least R-linear.
\label{thm:ConvergenceNewton}
\end{theorem}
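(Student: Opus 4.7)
The plan is to reduce Theorem \ref{thm:ConvergenceNewton} to a classical descent-method convergence result (in the style of Ortega and Rheinboldt) for the minimization of a twice-differentiable real-valued function on $\R^{2k}$. First I would identify $f:\Omega\to\R$ with the real function $\tilde{f}(\x,\y):=f(\x+i\y)$ on the corresponding open set $D\subseteq\R^{2k}$, and verify that the hypothesis $\Re\{\mathbf{h}^*\mathcal{H}_{\z\z}\mathbf{h}+\mathbf{h}^*\mathcal{H}_{\overline{\z}\z}\overline{\mathbf{h}}\}>0$ for every nonzero $\mathbf{h}\in\C^k$ is equivalent to positive definiteness of the real Hessian of $\tilde f$ at every point of $D$. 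This translation between the Wirtinger Hessian blocks and the real Hessian is standard and follows from the Wirtinger identities $\partial_\z=\tfrac{1}{2}(\partial_\x-i\partial_\y)$, $\partial_{\overline{\z}}=\tfrac{1}{2}(\partial_\x+i\partial_\y)$, which give an explicit linear isomorphism between the real and Wirtinger coordinates.

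Next, I would show that the complex Newton direction $\p(n)$ given in (\ref{eq:Newtupdatesone-step}) is, after this identification, precisely the real Newton direction of $\tilde f$. This requires writing the $2k\times 2k$ real Hessian of $\tilde f$ as a conjugation of the block matrix built from $\mathcal{H}_{\z\z},\mathcal{H}_{\z\overline{\z}},\mathcal{H}_{\overline{\z}\z},\mathcal{H}_{\overline{\z}\overline{\z}}$, and then invoking the Schur complement formula to read off the top block of the inverse; this is exactly the prefactor appearing in front of the bracket in (\ref{eq:Newtupdatesone-step}). A parallel computation then shows that the scalar $\mu(n)$ is the exact one-step Newton minimizer of the line-search function $\phi(\mu):=f(\z(n)-\mu\p(n))$, because one checks directly that $\phi'(0)=-2\Re\{\frac{\partial f}{\partial\z}(\z(n))\p(n)\}$ and $\phi''(0)=2\Re\{\p(n)^*\mathcal{H}_{\z\z}(\z(n))\p(n)+\p(n)^*\mathcal{H}_{\overline{\z}\z}(\z(n))\overline{\p(n)}\}$, so that the given formula for $\mu(n)$ is $-\phi'(0)/\phi''(0)$.

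With these identifications in place, I would establish strict monotone descent $f(\z(n+1))<f(\z(n))$. A second-order Taylor expansion of $\phi$ on the segment $[0,\omega(n)\mu(n)]$, together with the integral form of the remainder, shows that the sign of $\phi(\omega\mu)-\phi(0)$ is controlled by the ratio of the second derivative evaluated along the segment to its value at $0$, and this ratio is exactly what $\gamma(n)$ in (\ref{eq:GammaOmegaDef}) measures. The under-relaxation range $\epsilon\le\omega(n)\le 2/\gamma(n)-\epsilon$ in (\ref{eq:OmegaDef}) is tuned precisely so that the remainder keeps $\phi(\omega(n)\mu(n))<\phi(0)$, with a decrease bounded below by a positive multiple of $\mu(n)\Re\{\frac{\partial f}{\partial\z}(\z(n))\p(n)\}$. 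Summing this Zoutendijk-type bound and using compactness of $L^0_{\C^k}(f(\z(0)))$ (which the iterates never leave, by monotone descent) forces $\nabla\tilde f(\z(n))\to 0$; uniqueness of the stationary point in $L^0_{\C^k}(f(\z(0)))$ then gives $\z(n)\to\hat{\z}$. The at-least-R-linear rate follows from the uniform positive definiteness of $\mathcal{H}_{\z\z}$ in a neighborhood of $\hat{\z}$ and the standard local convergence estimate for Newton's method with steplengths bounded in a compact interval around $1$.

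The main obstacle will be the bookkeeping in the descent step: verifying that the range (\ref{eq:OmegaDef}) determined by $\gamma(n)$ is simultaneously wide enough to yield an implementable algorithm and tight enough to produce the Zoutendijk-type lower bound on $f(\z(n))-f(\z(n+1))$, all carried out in the Wirtinger calculus on $\C^k$ rather than by a direct appeal to $\R^{2k}$. Once that descent estimate is in hand, the remaining compactness, limit-point, and rate arguments proceed along the familiar real-variable template.
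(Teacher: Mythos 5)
Your proposal is correct and follows essentially the same route as the paper: both reduce the problem to $\R^{2k}$ by identifying $f$ with $\tilde f(\x,\y)$, check that the Wirtinger positivity condition is equivalent to positive definiteness of the real Hessian, verify that the complex Newton direction, the steplength $\mu(n)$, and the $\gamma(n)$-constraint on $\omega(n)$ are the transforms of their real counterparts, and then invoke the real descent-method theory. The only difference is one of delegation versus re-derivation: the paper cites the specific Ortega--Rheinboldt results (the descent lemma, the one-step Newton steplength theorem (14.2.9), the convergence theorem (14.3.6), and the gradient-relatedness lemma (14.4.1) for Newton directions on a compact level set), whereas you propose to re-prove that real-variable machinery via a Taylor-remainder descent estimate and a Zoutendijk-type summation --- workable, but note that to pass from your summed bound to $\nabla\tilde f(\z(n))\to 0$ you still need the uniform gradient-relatedness of the Newton directions on the compact level set (the content of (14.4.1)), not only near $\hat{\z}$ where you invoke it for the rate.
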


\par
To apply the one-step Newton steplength algorithm to the Newton's method or pseudo-Newton's method backpropagation algorithm for complex-valued holomorphic multilayer perceptrons, at the $n$th iteration in the training process, the one-step Newton steplength for the $p$th step in the backpropagation ($1\le p\le L$) is
\begin{equation}
\mu_p(n)=\frac{-\Re \left( \frac{\partial E}{\partial \mathbf{w}} \Delta \mathbf{w}\right)}{\Re \left\{ (\Delta\mathbf{w})^* \mathcal{H}_{\mathbf{w}\mathbf{w}}\Delta\mathbf{w} + (\Delta\mathbf{w})^* \mathcal{H}_{\overline{\mathbf{w}}\mathbf{w}}\overline{\Delta\mathbf{w}} \right\} },
\label{eq:steplengthforbackprop}
\end{equation}
where $\Delta \mathbf{w}=\Delta\mathbf{w}^{(p-1)}$ is the weight update for the $p$th layer of the network given by Theorem \ref{thm:complexnewtbackprop} or Corollary \ref{cor:complexp-newtbackprop}, respectively, and $\mathbf{w}=\mathbf{w}^{(p-1)}$.  (Recall (\ref{eq:updatestep}), so that here $\p(n)=-\Delta\mathbf{w}^{(p-1)}$ in (\ref{eq:iteration}).) For the pseudo-Newton's method backpropagation, we set $\mathcal{H}_{\overline{\mathbf{w}^{(p-1)}}\mathbf{w}^{(p-1)}}=\mathcal{H}_{\mathbf{w}^{(p-1)}\overline{\mathbf{w}^{(p-1)}}}=0$ in (\ref{eq:Newtupdatesone-step}) to obtain the pseudo-Newton updates $\Delta \mathbf{w}^{(p-1)}$ given in Corollary \ref{cor:complexp-newtbackprop}, but leave $\mathcal{H}_{\overline{\mathbf{w}^{(p-1)}}\mathbf{w}^{(p-1)}}$ as calculated in Theorem \ref{thm:complexnewtbackprop} in (\ref{eq:steplengthforbackprop}).  In theory, for the $n$th iteration in the training process, we should choose the underrelaxation factor $\omega_p(n)$ for the $p$th step in the backpropagation $(1\le p\le L)$ according to (\ref{eq:OmegaDef}) and (\ref{eq:GammaOmegaDef}).  However, in practical application it suffices to take the underrelaxation factors to be constant and they may be chosen experimentally to yield convergence of the error function (see our results in Section VII).  It is also not necessary in practical application to verify all the conditions of Theorem \ref{thm:ConvergenceNewton}.  In particular we may assume that the error function has a stationary point sufficiently close to the initial weights since the initial weights were chosen specifically to be ``nearby'' a stationary point, and that the stationary point is unique in the appropriate compact level set of the initial weights since the set of zeros of the error function has measure zero.

\section{Experiments}

\begin{table}[b] \begin{center}
\begin{tabular}{|c|c|}\hline
Input Pattern & Output\\ \hline
0 0 & 0\\
1 0 & 1\\
0 1 & 1\\
1 1 & 0\\ \hline
\end{tabular}
\vspace{2mm}
\caption{XOR Training Set}
\label{tab:XORdata}\end{center}
\end{table}

\par
To test the efficiency of the algorithms in the previous sections, we will compare the results of applying the gradient descent method, Newton's method, and the pseudo-Newton's method to a holomorphic MLP trained with data from the real-valued exclusive-or (XOR) problem (see Table \ref{tab:XORdata}).  Note that the complex-valued XOR problem has different criteria for the data set \cite{Savitha2009}.  We use the real-valued XOR problem as we desire a complex-valued network to process real as well as complex data.

\par
The XOR problem is frequently encountered in the literature as a test case for backpropagation algorithms \cite{Pande2007}.  A multilayer network is required to solve it: without hidden units the network is unable to distinguish overlapping input patterns which map to different output patterns, e.g. $(0,0)$ and $(1,0)$ \cite{Rumelhart1986}.  We use a two-layer network with $m=2$ input nodes, $K=4$ hidden nodes, and $C=1$ output nodes.  Any Boolean function of $m$ variables can be trained to a two-layered real-valued neural network with $2^m$ hidden units.  Modeling after the real case we choose $K=2^m$, although this could perhaps be accomplished with fewer hidden units, as $2^{m-1}$ is a smaller upper bound for real-valued neural networks \cite{Hassoun1995}.  Some discussion of approximating Boolean functions, including the XOR and parity problems, using complex-valued neural networks is given in \cite{Nemoto1992}.

\par
In our experiments, the activation functions are taken to be the same for both the hidden and output layers of the network.  The activation function is either the sigmoidal function or its third degree\footnote{One can take a higher degree Taylor polynomial approximation, but this is sufficient for our purposes.} Taylor polynomial approximation 
$$g(z)=\frac{1}{1+\exp(-z)} \textrm{ or } T(z)=\frac{1}{2} +\frac{1}{4}z-\frac{1}{48}z^3.$$
Notice that while $g(z)$ has poles near zero, the polynomial $T(z)$ is analytic on the entire complex plane and bounded on bounded regions (see Figure \ref{fig:graphsigvspoly}).

\begin{figure}[t]\begin{center}
\includegraphics[scale=0.2]{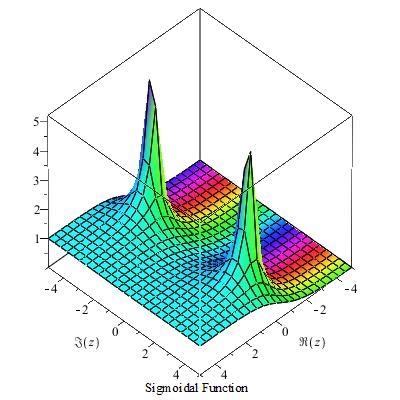} \includegraphics[scale=0.2]{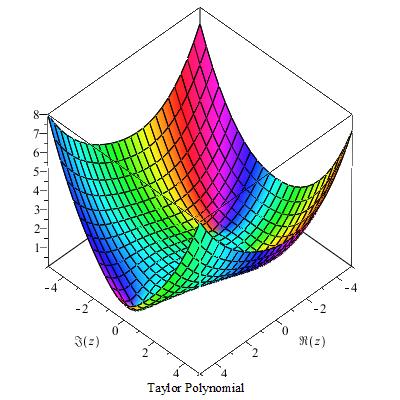}
\caption{The sigmoidal function (left) has two poles in a region near $0$, while a Taylor polynomial approximation (right) of the sigmoidal function is bounded on the same region.}
\label{fig:graphsigvspoly}\end{center}
\end{figure}

\par
For each activation function we trained the network using the gradient descent backpropagation algorithm, the Newton backpropagation algorithm, and the pseudo-Newton backpropagation algorithm.  The real and imaginary parts of the initial weights for each trial were chosen randomly from the interval $[-1,1]$ according to a uniform distribution.  In each case the network was trained to within $0.001$ error.  One hundred trials were performed for each activation function and each backpropagation algorithm (note that the same set of random initial weights was used for each set of trials).  For the trials using the gradient descent backpropagation algorithm, a constant learning rate ($\mu$) was used.  It is known that for the gradient descent algorithm for real-valued neural networks, some learning rates will result in nonconvergence of the error function \cite{LeCun1991}.  There is experimental evidence that for elementary transcendental activation functions used in complex-valued neural networks, sensitivity of the gradient descent algorithm to the choice of the learning rate can result in nonconvergence of the error function as well, and this is not necessarily affected by changes in the initial weight distribution \cite{Savitha2009}.  To avoid these problems, a learning rate of $\mu=1$ was chosen both to guarantee convergence and to yield fast convergence (as compared to other values of $\mu$).  For the trials using the Newton and pseudo-Newton backpropagation algorithms, a variable learning rate (steplength) was chosen according to the one-step Newton steplength algorithm (Theorem \ref{thm:ConvergenceNewton}) to control the problem of ``overshooting'' of the iterates and nonconvergence of the error function when a fixed learning rate was used.  For both the Newton and pseudo-Newton trials, a constant underrelaxation factor of $\omega=0.5$ was used; this was chosen to yield the best chance for convergence of the error function. The results are summarized in Table \ref{tab:experResults}.

\begin{table}[!t]\begin{center}\tiny
\begin{tabular}{|c|c|c|c|c|c|}\hline
                &               &                            &                        & \bf{Number of}   & \bf{Average}     \\
\bf{Activation} & \bf{Training} & \bf{Learning}              & \bf{Underrelaxation}   & \bf{Successful}      & \bf{Number of}   \\
\bf{Function}   & \bf{Method}   & \bf{Rate ($\mathbf{\mu}$)} & \bf{Factor ($\omega$)} & \bf{Trials}          & \bf{Iterations*} \\ \hline

Sigmoidal       & Gradient      & $\mu=1$                    & None                   & 93                   & 1258.9           \\
                & Descent       &                            &                        &                      &                  \\ \hline

Sigmoidal       & Newton        & One-Step                   & $\omega=0.5$           & 5                    & 7.0              \\
                &               & Newton                     &                        &                      &                  \\ \hline
      
Sigmoidal       & Pseudo-       & One-Step                   & $\omega=0.5$           & 78                   & 7.0              \\
                & Newton        & Newton                     &                        &                      &                  \\ \hline
                
Polynomial      & Gradient      & $\mu=1$                    & None                   & 93                   & 932.2            \\
                & Descent       &                            &                        &                      &                  \\ \hline

Polynomial      & Newton        & One-Step                   & $\omega=0.5$           & 53                   & 107.9            \\
                &               & Newton                     &                        &                      &                  \\ \hline
      
Polynomial      & Pseudo-       & One-Step                   & $\omega=0.5$           & 99                   & 23.7             \\
                & Newton        & Newton                     &                        &                      &                  \\ \hline     \end{tabular}
\vspace{2mm}

\tiny{*Over the successful trials.}
\caption{XOR Experiment Results}
\label{tab:experResults}\end{center}
\end{table}
\normalsize

\begin{table}[!t]\begin{center}\tiny
\begin{tabular}{|c|c|c|c|c|c|c|}\hline
                &               &              &           & \bf{Undefined} &               & \bf{Total}        \\
\bf{Activation} & \bf{Training} & \bf{Local}   & \bf{Blow} & \bf{Floating}  & \bf{Singular} & \bf{Unsuccessful} \\
\bf{Function}   & \bf{Method}   & \bf{Minimum} & \bf{Up}   & \bf{Point}     & \bf{Matrix}   & \bf{Trials}       \\ \hline

Sigmoidal       & Gradient      & 1            & 0         & 6              & N/A           & 7                 \\
                & Descent       &              &           &                &               &                   \\ \hline

Sigmoidal       & Newton        & 0            & 0         & 68             & 27            & 95                \\ \hline

Sigmoidal       & Pseudo-       & 0            & 0         & 14             & 8             & 22                \\
                & Newton        &              &           &                &               &                   \\ \hline
                
Polynomial      & Gradient      & 0            & 0         & 7              & N/A           & 7                 \\
                & Descent       &              &           &                &               &                   \\ \hline

Polynomial      & Newton        & 26           & 2         & 2              & 17            & 47                \\ \hline
      
Polynomial      & Pseudo-       & 1            & 0         & 0              & 0             & 1                 \\
                & Newton        &              &           &                &               &                   \\ \hline
\end{tabular}
\vspace{2mm}
\caption{Unsuccessful Trials}
\label{tab:unsuccess}\end{center}
\end{table}

\normalsize

\par
Over the successful trials, the polynomial activation function performed just as well as the traditional sigmoidal function for the gradient descent backpropagation algorithm and yielded more successful trials than the sigmoidal function for the Newton and pseudo-Newton backpropagation algorithms.  We define a successful trial to be one in which the error function dropped below $0.001$.  We logged four different types of unsuccessful trials (see Table \ref{tab:unsuccess}).  Convergence of the error function to a local minimum occurred when, after at least 50,000 iterations for gradient descent and 5,000 iterations for the Newton and pseudo-Newton algorithms, the error function remained above $0.001$ but had stabilized to within $10^{-10}$ between successive iterations.  This occurred more frequently in the Newton's method trails than the gradient descent trials, which was expected due to the known sensitivity of Newton's method to the initial points.  A blow up of the error function occurred when, after the same minimum number of iterations as above, the error function had increased to above $10^{10}$.  The final value of the error function was sometimes an undefined floating point number, probably the result of division by zero.  This occurred less frequently with the polynomial activation function than with the sigmoidal activation function.  Finally, the last type of unsuccessful trial resulted from a singular Hessian matrix (occurring only in the Newton and pseudo-Newton trials).  This, necessarily, halted the backpropagation process, and occurred less frequently with the polynomial activation function than with the sigmoidal activation function.

\par
As for efficiency, the Newton and pseudo-Newton algorithms required significantly fewer iterations of the backpropagation algorithm to train the network than the gradient descent method for each activation function.  In addition to producing fewer unsuccessful trials, the pseudo-Newton algorithm yielded a lower average number of iterations than the Newton algorithm for the polynomial activation function and the same average number of iterations as the Newton algorithm for the sigmoidal activation function.  The network with polynomial activation function trained using the pseudo-Newton algorithm produced the fewest unsuccessful trials.  Overall, we conclude that the use of the polynomial activation function yields more consistent convergence of the error function than the use of the sigmoidal activation function, and the use of the Newton and pseudo-Newton algorithms yields significantly fewer training iterations than the use of the gradient descent method.

\section{Conclusion}

\par
We have developed the backpropagation algorithm using Newton's method for complex-valued holomorphic multilayer perceptrons. The extension of real-valued neural networks to complex-valued neural networks is natural and doing so allows the proper treatment of the phase information. However, the choice of nonlinear activation functions poses a challenge in the backpropagation algorithm. The usual complex counterparts of the commonly used real-valued activation functions are no longer unbounded: they have poles near zero, while other choices are not fully complex-valued functions. To provide experimental evidence for the choice of holomophic functions as activation functions in addition to mathematical reasoning, we compared the results of using the complex-valued sigmoidal function as activation functions and the results of using its Taylor polynomial approximation as activation functions. Our experiments showed that when Newton's method was used for the XOR example, Taylor polynomial approximations are better choices. The use of polynomials as activation functions allows the possibility of rigorous analysis of performance of the algorithm, as well as making connections with other topics of complex analysis, which are virtually nonexistent in complex-valued neural network studies so far.  These topics are under investigation currently.

\bibliographystyle{unsrt}

\bibliography{references}

\appendix

\section{Derivation of the Entries of the Hessian Matrices for the Newton's Method Backpropagation Algorithm}

We give the detail, which was omitted in the main body of the paper, for the computation of the entries of the Hessian matrices $\mathcal{H}_{\overline{\mathbf{w}}\mathbf{w}}$ for the $(p-1)$th layer of the holomorphic multilayer perceptron, $1\le p\le L$, which are given recursively by (\ref{eq:hidconjdeltas}) and (\ref{eq:deltapbyxconj}), in a manner similar to the computation of the Hessian matrices $\mathcal{H}_{\mathbf{w}\mathbf{w}}$ given in Section IV.  Using the cogradients (\ref{eq:grad}) we have:
\begin{equation}
\frac{\partial}{\partial \overline{w^{(p-1)}_{ba}}} \left(\frac{\partial E}{\partial w^{(p-1)}_{ji}} \right)^* 
= \frac{1}{N} \sum_{t=1}^N \frac{\partial E^{(p)}_{tj}}{\partial \overline{w^{(p-1)}_{ba}}}  \overline{x^{(p-1)}_{ti}},
\label{eq:hidconjcomp}
\end{equation}
where $j,b=1,...,K_p$ and $i,a=1,...,K_{p-1}$.  Using (\ref{eq:deltap}),
\begin{equation*}
\begin{split}
&\frac{\partial E^{(p)}_{tj}}{\partial \overline{w^{(p-1)}_{ba}}} = \frac{\partial}{\partial \overline{w^{(p-1)}_{ba}}} \left[\left(\sum_{\eta=1}^{K_{p+1}} E^{(p+1)}_{t\eta} \overline{w^{(p)}_{\eta j}}\right) g_p'\left(\overline{\left(x^{(p)}_{tj}\right)^{\textrm{net}}}\right)\right]\\
&=g_p'\left(\overline{\left(x^{(p)}_{tj}\right)^{\textrm{net}}}\right) \sum_{\eta=1}^{K_{p+1}} \frac{\partial E^{(p+1)}_{t\eta}}{\partial \overline{w^{(p-1)}_{ba}}}  \overline{w^{(p)}_{\eta j}}+ \frac{\partial g_p'\left(\overline{\left(x^{(p)}_{tj}\right)^{\textrm{net}}}\right)}{\partial \overline{w^{(p-1)}_{ba}}}\sum_{\eta=1}^{K_{p+1}} E^{(p+1)}_{t\eta} \overline{w^{(p)}_{\eta j}},\\
\end{split}
\end{equation*}
where
\begin{equation*}
\begin{split}
\frac{\partial E^{(p+1)}_{t\eta}}{\partial \overline{w^{(p-1)}_{ba}}} &= \frac{\partial E^{(p+1)}_{t\eta}}{\partial x^{(p)}_{tb}} \frac{\partial x^{(p)}_{tb}}{\partial \overline{w^{(p-1)}_{ba}}} +\frac{\partial E^{(p+1)}_{t\eta}}{\partial \overline{x^{(p)}_{tb}}} \frac{\overline{\partial x^{(p)}_{tb}}}{\partial \overline{w^{(p-1)}_{ba}}}\\
&= \frac{\partial E^{(p+1)}_{t\eta}}{\partial \overline{x^{(p)}_{tb}}} \left[ \frac{\partial \overline{x^{(p)}_{tb}}}{\partial \overline{\left(x^{(p)}_{tb}\right)^{\textrm{net}}}} \frac{\partial \overline{\left(x^{(p)}_{tb}\right)^{\textrm{net}}}}{\partial  \overline{w^{(p-1)}_{ba}}} + \frac{\partial \overline{x^{(p)}_{tb}}}{\partial \left(x^{(p)}_{tb}\right)^{\textrm{net}}} \frac{\partial \left(x^{(p)}_{tb}\right)^{\textrm{net}}}{\partial  \overline{w^{(p-1)}_{ba}}}\right]\\
&= \frac{\partial E^{(p+1)}_{t\eta}}{\partial \overline{x^{(p)}_{tb}}} g_p'\left(\overline{\left(x^{(p)}_{tb}\right)^{\textrm{net}}}\right) \overline{x^{(p-1)}_{ta}}
\end{split}
\label{eq:halfcrossdelta1}
\end{equation*}
and
\begin{equation*}
\begin{split}
&\frac{\partial g_p'\left(\overline{\left(x^{(p)}_{tj}\right)^{\textrm{net}}}\right)}{\partial \overline{w^{(p-1)}_{ba}}}\\
&\hspace{10mm}= \frac{\partial g_p'\left(\overline{\left(x^{(p)}_{tj}\right)^{\textrm{net}}}\right)}{\partial \overline{\left(x^{(p)}_{tj}\right)^{\textrm{net}}}} \frac{\partial \overline{\left(x^{(p)}_{tj}\right)^{\textrm{net}}}}{\partial \overline{w^{(p-1)}_{ba}}}+ \frac{\partial g_p'\left(\overline{\left(x^{(p)}_{tj}\right)^{\textrm{net}}}\right)}{\partial \left(x^{(p)}_{tj}\right)^{\textrm{net}}} \frac{\left(x^{(p)}_{tj}\right)^{\textrm{net}}}{\partial \overline{w^{(p-1)}_{ba}}}\\
&\hspace{10mm}=\left\{ \begin{array}{ll} g_p''\left(\overline{\left(x^{(p)}_{tj}\right)^{\textrm{net}}}\right)\overline{x_{ta}^{(p-1)}} & \textrm{if } j=b,\\ 0 & \textrm{if } j\neq b, \end{array}\right.
\end{split}
\label{eq:halfcrossdelta2}
\end{equation*}
so that
\begin{equation}
\frac{\partial E^{(p)}_{tj}}{\partial \overline{w^{(p-1)}_{ba}}} =\left\{ \begin{array}{l}

\left\{\left[ \sum_{\eta=1}^{K_{p+1}} \frac{\partial E^{(p+1)}_{t\eta}}{\partial \overline{x^{(p)}_{tb}}} \overline{w^{(p)}_{\eta j}}\right] g_p'\left(\overline{\left(x^{(p)}_{tj}\right)^{\textrm{net}}}\right)g_p'\left(\overline{\left(x^{(p)}_{tb}\right)^{\textrm{net}}}\right) \right. \\

\hspace{5mm} \left. + \left[ \sum_{\eta=1}^{K_{p+1}} E^{(p+1)}_{t\eta} \overline{w^{(p)}_{\eta j}} \right]g_p''\left( \overline{\left( x^{(p)}_{tj}\right)^{\textrm{net}}}\right)\right\} \overline{x^{(p-1)}_{ta}}\\

\hspace{55mm}\textrm{if }j=b, \\

\left[ \sum_{\eta=1}^{K_{p+1}} \frac{\partial E^{(p+1)}_{t\eta}}{\partial \overline{x^{(p)}_{tb}}} \overline{w^{(p)}_{\eta j}}\right] g_p'\left(\overline{\left(x^{(p)}_{tj}\right)^{\textrm{net}}}\right)g_p'\left(\overline{\left(x^{(p)}_{tb}\right)^{\textrm{net}}}\right)\overline{x^{(p-1)}_{ta}}\\

\hspace{55mm}\textrm{if }j\neq b. \\
 \end{array}\right.
\label{eq:deltawconj}
\end{equation}
Combining (\ref{eq:hidconjcomp}) and (\ref{eq:deltawconj}), we get
\begin{equation}
\begin{split}
&\frac{\partial}{\partial \overline{w^{(p-1)}_{ba}}} \left( \frac{\partial E}{\partial w^{(p-1)}_{ji}} \right)^*\\
&=\left\{ \begin{array}{l} 
\frac{1}{N} \sum_{t=1}^N \left\{ \left[ \sum_{\eta=1}^{K_{p+1}}\frac{\partial E^{(p+1)}_{t\eta}}{\partial \overline{x^{(p)}_{tb}}} \overline{w^{(p)}_{\eta j}} \right] g_p'( \overline{(x^{(p)}_{tj})^{\textrm{net}}})g_p'( \overline{(x^{(p)}_{tb})^{\textrm{net}}}) \right.\\
\hspace{10mm} \left. +\left[\sum_{\eta=1}^{K_{p+1}} E_{t\eta}^{(p+1)} \overline{w^{(p)}_{\eta j}} \right] g_p''( \overline{(x^{(p)}_{tj})^{\textrm{net}}}) \right\} \overline{x^{(p-1)}_{ti}}\overline{x^{(p-1)}_{ta}} \textrm{ if }j=b,\\
\frac{1}{N} \sum_{t=1}^N \left\{ \left[ \sum_{\eta=1}^{K_{p+1}}\frac{\partial E^{(p+1)}_{t\eta}}{\partial \overline{x^{(p)}_{tb}}} \overline{w^{(p)}_{\eta j}} \right] g_p'( \overline{(x^{(p)}_{tj})^{\textrm{net}}})g_p'( \overline{(x^{(p)}_{tb})^{\textrm{net}}}) \right\}\\
\hspace{45mm}\cdot \overline{x^{(p-1)}_{ti}}\overline{x^{(p-1)}_{ta}}\hspace{8mm}\textrm{ if }j\neq b,
\end{array}\right.
\end{split}
\label{eq:hidconjdeltas2}
\end{equation}
where $\frac{\partial E^{(p+1)}_{t\eta}}{\partial \overline{x^{(p)}_{tb}}}$ can be computed recursively:
\begin{equation}
\begin{split}
\frac{\partial E^{(p+1)}_{t\eta}}{\partial \overline{x^{(p)}_{tb}}} &= \frac{\partial}{\partial \overline{x^{(p)}_{tb}}} \left[ \left( \sum_{\alpha=1}^{K_{p+2}} E^{(p+2)}_{t\alpha} \overline{w^{(p+1)}_{\alpha \eta}}\right) g'_{p+1}\left(\overline{\left( x^{(p+1)}_{t\eta}\right)^{\textrm{net}}}\right) \right]\\&=g'_{p+1}\left(\overline{\left( x^{(p+1)}_{t\eta}\right)^{\textrm{net}}}\right) \sum_{\alpha=1}^{K_{p+2}} \frac{\partial E^{(p+2)}_{t\alpha}}{\partial \overline{x^{(p)}_{tb}}} \overline{w^{(p+1)}_{\alpha \eta}}\\
&\hspace{20mm} + \frac{\partial g'_{p+1}\left(\overline{\left( x^{(p+1)}_{t\eta}\right)^{\textrm{net}}}\right)}{\partial \overline{x^{(p)}_{tb}}}\sum_{\alpha=1}^{K_{p+2}} E^{(p+2)}_{t\alpha} \overline{w^{(p+1)}_{\alpha \eta}}\\
\end{split}
\label{eq:deltapbyxconj2}
\end{equation}
\begin{equation*}
\begin{split}
&=g'_{p+1}\left(\overline{\left( x^{(p+1)}_{t\eta}\right)^{\textrm{net}}}\right) \sum_{\alpha=1}^{K_{p+2}} \sum_{\beta=1}^{K_{p+1}} \left[ \frac{\partial E^{(p+2)}_{t\alpha}}{\partial x^{(p+1)}_{t\beta}} \frac{\partial x^{(p+1)}_{t\beta}}{\partial \overline{x^{(p)}_{tb}}} \right.\\
&\hspace{45mm} \left. + \frac{\partial E^{(p+2)}_{t\alpha}}{\partial \overline{x^{(p+1)}_{t\beta}}} \frac{\partial \overline{x^{(p+1)}_{t\beta}}}{\partial \overline{x^{(p)}_{tb}}}\right] \overline{w^{(p+1)}_{\alpha \eta}}\\
&\hspace{10mm} + \left[ \frac{\partial g'_{p+1}\left(\overline{\left( x^{(p+1)}_{t\eta}\right)^{\textrm{net}}}\right)}{\partial \overline{\left( x^{(p+1)}_{t\eta}\right)^{\textrm{net}}}} \frac{\partial \overline{\left( x^{(p+1)}_{t\eta}\right)^{\textrm{net}}}}{\partial \overline{x^{(p)}_{tb}}}  \right.\\
&\hspace{15mm} +\left. \frac{\partial g'_{p+1}\left(\overline{\left( x^{(p+1)}_{t\eta}\right)^{\textrm{net}}}\right)}{\partial \left( x^{(p+1)}_{t\eta}\right)^{\textrm{net}}} \frac{\partial \left( x^{(p+1)}_{t\eta}\right)^{\textrm{net}}}{\partial \overline{x^{(p)}_{tb}}}\right] \sum_{\alpha=1}^{K_{p+2}} E^{(p+2)}_{t\alpha} \overline{w^{(p+1)}_{\alpha \eta}}\\
&= g'_{p+1}\left(\overline{\left( x^{(p+1)}_{t\eta}\right)^{\textrm{net}}}\right) \sum_{\alpha=1}^{K_{p+2}} \sum_{\beta=1}^{K_{p+1}} \frac{\partial E^{(p+2)}_{t\alpha}}{\partial \overline{x^{(p+1)}_{t\beta}}} \left[ \frac{\partial \overline{x^{(p+1)}_{t\beta}}}{\partial \left(x^{(p+1)}_{t\beta}\right)^{\textrm{net}}} \frac{\partial \left(x^{(p+1)}_{t\beta}\right)^{\textrm{net}}}{\partial \overline{x^{(p)}_{tb}}}  \right.\\
&\hspace{40mm}+\left. \frac{\partial \overline{x^{(p+1)}_{t\beta}}}{\partial \overline{\left(x^{(p+1)}_{t\beta}\right)^{\textrm{net}}}} \frac{\partial \overline{\left(x^{(p+1)}_{t\beta}\right)^{\textrm{net}}}}{\partial \overline{x^{(p)}_{tb}}}\right] \overline{w^{(p+1)}_{\alpha \eta}}\\
& \hspace{10mm} +g''_{p+1}\left(\overline{\left( x^{(p+1)}_{t\eta}\right)^{\textrm{net}}}\right) \overline{w^{(p)}_{\eta b}} \sum_{\alpha=1}^{K_{p+2}} E^{(p+2)}_{t\alpha} \overline{w^{(p+1)}_{\alpha \eta}}\\
&=g'_{p+1}\left(\overline{\left( x^{(p+1)}_{t\eta}\right)^{\textrm{net}}}\right) \sum_{\alpha=1}^{K_{p+2}} \sum_{\beta=1}^{K_{p+1}} \frac{\partial E^{(p+2)}_{t\alpha}}{\partial \overline{x^{(p+1)}_{t\beta}}} \\
&\hspace{20mm} \cdot g'_{p+1}\left(\overline{\left( x^{(p+1)}_{t\beta}\right)^{\textrm{net}}}\right) \overline{w^{(p)}_{\beta b}}  \overline{w^{(p+1)}_{\alpha \eta}}\\
&\hspace{10mm}+g''_{p+1}\left(\overline{\left( x^{(p+1)}_{t\eta}\right)^{\textrm{net}}}\right) \overline{w^{(p)}_{\eta b}} \sum_{\alpha=1}^{K_{p+2}} E^{(p+2)}_{t\alpha} \overline{w^{(p+1)}_{\alpha \eta}}\\
&=\sum_{\beta=1}^{K_{p+1}} \left[ \sum_{\alpha=1}^{K_{p+2}} \frac{\partial E^{(p+2)}_{t\alpha}}{\partial \overline{x^{(p+1)}_{t\beta}}}\overline{w^{(p+1)}_{\alpha \eta}} \right]\\
&\hspace{20mm} \cdot g'_{p+1}\left(\overline{\left( x^{(p+1)}_{t\eta}\right)^{\textrm{net}}}\right) g'_{p+1}\left(\overline{\left( x^{(p+1)}_{t\beta}\right)^{\textrm{net}}}\right) \overline{w^{(p)}_{\beta b}}\\
&\hspace{10mm}+\left[\sum_{\alpha=1}^{K_{p+2}} E^{(p+2)}_{t\alpha} \overline{w^{(p+1)}_{\alpha \eta}} \right]g''_{p+1}\left(\overline{\left( x^{(p+1)}_{t\eta}\right)^{\textrm{net}}}\right) \overline{w^{(p)}_{\eta b}}.
\end{split}
\label{eq:deltabyxconj3}
\end{equation*}

\section{Convergence of the One-Step Newton Steplength Algorithm of Real-Valued Complex Functions}

\par
Let $f:\Omega\subseteq\C^k \to \R$, and consider a general minimization algorithm with sequence of iterates $\{ \z(n)\}$ given recursively by 
\begin{equation}
\z(n+1)=\z(n)-\mu(n)\p(n), \textrm{ } n=0,1,...,
\label{eq:GeneralIterate}
\end{equation}
where $\p(n)\in\C^k$ such that $-\p(n)$ is the direction from the $n$th iterate to the $(n+1)$th iterate and $\mu(n)\in \R$ is the learning rate or steplength which we allow to vary with each step.  We are interested in guaranteeing that the minimization algorithm is a descent method, that is, that at each stage of the iteration the inequality $f(\z(n+1))\le f(\z(n))$ for $n=0,1,...$ holds.   Here, we provide details of the proof of the one-step Newton steplength algorithm for the minimization of real-valued functions on complex domains.  Our treatment follows the exposition in \cite{Ortega1970}, with the application to the complex Newton algorithm providing a proof of Theorem \ref{thm:ConvergenceNewton}.

\par

\begin{lemma}
Suppose that $f:\Omega\subseteq\C^k \to \R$ is $\R$-differentiable at $\z\in\mathrm{int} (\Omega)$ and that there exists $\p\in\C^k$ such that $\mathrm{Re} \left( \frac{\partial f}{\partial \z}(\z) \p\right) >0$.  Then there exists a $\delta>0$ such that $f(\z-\mu \p)<f(\z) \textrm{ for all } \mu\in (0,\delta)$.
\label{lem:ExistenceofMu}
\end{lemma}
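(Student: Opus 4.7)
The plan is to reduce the statement to a one-variable calculus fact by studying the real function $\phi:\mu \mapsto f(\z - \mu \p)$. Since $\z \in \mathrm{int}(\Omega)$, there is some $\delta_0 > 0$ such that $\z - \mu\p \in \Omega$ for all $\mu \in (-\delta_0, \delta_0)$, so $\phi$ is well-defined on this interval, and the $\R$-differentiability of $f$ at $\z$ guarantees that $\phi$ is differentiable at $0$.

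Next I would compute $\phi'(0)$ using the Wirtinger chain rule. Writing $\z = \x + i\y$ and $\p = \p_1 + i\p_2$ with $\x, \y, \p_1, \p_2 \in \R^k$, the map $\mu \mapsto \z - \mu\p$ corresponds to $(\x - \mu\p_1, \y - \mu\p_2)$ in $\R^{2k}$, so the ordinary chain rule plus the defining relations $\frac{\partial}{\partial \z} = \tfrac12(\frac{\partial}{\partial \x} - i\frac{\partial}{\partial \y})$ and $\frac{\partial}{\partial \overline{\z}} = \tfrac12(\frac{\partial}{\partial \x} + i\frac{\partial}{\partial \y})$ yield
\begin{equation*}
\phi'(0) = -\frac{\partial f}{\partial \z}(\z)\,\p \;-\; \frac{\partial f}{\partial \overline{\z}}(\z)\,\overline{\p}.
\end{equation*}
Because $f$ is real-valued, $\frac{\partial f}{\partial \overline{\z}}(\z) = \overline{\bigl(\frac{\partial f}{\partial \z}(\z)\bigr)}$, so the two terms are complex conjugates and
\begin{equation*}
\phi'(0) = -2\,\mathrm{Re}\!\left(\frac{\partial f}{\partial \z}(\z)\,\p\right) < 0
\end{equation*}
by hypothesis.

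Finally, since $\phi'(0) < 0$, there exists $\delta \in (0, \delta_0)$ such that $\frac{\phi(\mu) - \phi(0)}{\mu} < 0$ for all $\mu \in (0, \delta)$, which gives $f(\z - \mu\p) = \phi(\mu) < \phi(0) = f(\z)$ on $(0, \delta)$, as required.

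The proof is really just bookkeeping: the only point that requires any care is the Wirtinger chain rule computation identifying $\phi'(0)$ with $-2\mathrm{Re}(\partial_\z f(\z)\p)$, and this is completely standard for real-valued $f$. I do not anticipate a genuine obstacle; the statement is essentially the complex-variable restatement of the familiar fact that any vector with positive inner product with the gradient $\nabla f$ in $\R^{2k}$ is an ascent direction.
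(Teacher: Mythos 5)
Your proof is correct and follows essentially the same route as the paper: both reduce the claim to the real directional-derivative computation showing that $\p$ corresponds to an ascent direction of $f$ viewed on $\R^{2k}$, via the identity $\frac{\partial f}{\partial \x}\p_R + \frac{\partial f}{\partial \y}\p_I = 2\,\mathrm{Re}\left(\frac{\partial f}{\partial \z}(\z)\p\right)$. The only difference is that the paper then cites the standard real descent lemma ((8.2.1) in \cite{Ortega1970}) for the final step, whereas you prove that one-variable fact directly from the sign of $\phi'(0)$; this is a harmless, self-contained substitute for the citation.
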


\begin{proof}
Let $\z=\x+i\y\in \textrm{int}(\Omega)$ with $\x,\y\in\R^k$.  The function $f:\Omega\subseteq\C^k\to\R$ is $\R$-differentiable at $\z$ if and only if $f:D\subseteq\R^{2k}\to\R$ is (Frechet) differentiable at $(\x,\y)^T\in\mathrm{int}(D)$, where $D$ is defined as in (\ref{eq:Ddef}) and the (Frechet) derivative (equal to the Gateau derivative) at $(\x,\y)^T$ is given by $\left( \frac{\partial f}{\partial \x}, \frac{\partial f}{\partial \y} \right)$.
Suppose there exists $\p=\p_R+i\p_I\in\C^k$ with $\p_R,\p_I\in \R^k$ such that  $\mathrm{Re} \left( \frac{\partial f}{\partial \z}(\z) \p\right) >0$.  Then using the coordinate and cogradient transformations (\ref{eq:CoordTransform}) and (\ref{eq:CogradTransform}) and the fact that $f$ is real-valued, we have the following (\cite{Kreutz-Delgado2009}, pg. 34):
\begin{equation}
\begin{split}
&\left( \frac{\partial f}{\partial \x}(\x,\y), \frac{\partial f}{\partial \y}(\x,\y)\right)\left( \begin{array}{c} \p_R \\ \p_I \end{array}\right) = \left( \frac{\partial f}{\partial \z}(\z,\overline{\z}), \frac{\partial f}{\partial \overline{\z}}(\z,\overline{\z})\right) J \cdot \frac{1}{2} J^* \left( \begin{array}{c} \p \\ \overline{\p} \end{array}\right)\\
& = \frac{\partial f}{\partial \z}(\z,\overline{\z}) \p +\frac{\partial f}{\partial \overline{\z}}(\z,\overline{\z})\overline{\p}  = \frac{\partial f}{\partial \z}(\z) \p +\overline{\frac{\partial f}{\partial \z}(\z)\p} = 2\mathrm{Re} \left( \frac{\partial f}{\partial \z}(\z) \p\right) >0.
\end{split}
\label{eq:ConditionTransform}
\end{equation}
By (8.2.1) in \cite{Ortega1970} there exists a $\delta>0$ such that $f((\x,\y)-\mu(\p_R,\p_I))<f(\x,\y) \textrm{ for all } \mu\in (0,\delta).$  Viewing $f$ again as a function on the complex domain $\Omega$, this is equivalent to the statement that $f(\z-\mu \p)<f(\z) \textrm{ for all } \mu\in (0,\delta)$.\end{proof}

\par
Recall from Section VI that a stationary point of $f$ to be a stationary point in the sense of the function $f(\z)=f(\x,\y):D\subseteq\R^{2k}\to\R.$  If $\hat{\z}=\hat{\x}+i\hat{\y}$ with $\hat{\x},\hat{\y}\in\R^k$, then $\hat{\z}$ is a stationary point of $f$ if and only if $\frac{\partial f}{\partial \x}(\hat{\x},\hat{\y})=\frac{\partial f}{\partial \y}(\hat{\x},\hat{\y})=0$.  Note that if $\Re \left( \frac{\partial f}{\partial \z}(\z)\right)\neq 0$ for $\z\in\textrm{int} (\Omega)$ (i.e. $\z$ is not a stationary point), then there always exists a $\p\in\C^k$ such that $\Re \left( \frac{\partial f}{\partial \z}(\z) \p\right)>0$.  So this result is always true in the real domain, and the proof of Lemma \ref{lem:ExistenceofMu} only translates the result from the real domain to the complex domain.

\par
For the sequence of iterates $\{ \z(n)\}$ given by (\ref{eq:GeneralIterate}), we can find a sequence $\{ \p(n)\}$ such that $\Re\left( \frac{\partial f}{\partial \z}(\z(n))\p(n)\right)>0$ for $n=0,1,...$.  By Lemma \ref{lem:ExistenceofMu}, for each $n$ there is at least one $\mu(n) \in (0,\infty)$ such that $f(\z(n)-\mu(n) \p(n))<f(\z(n))$.  At each step in the algorithm we would like to make the largest descent in the value of $f$ as possible, so finding a desirable steplength $\mu(n)$ to guarantee descent translates into the real one-dimensional  problem of minimizing $f(\z(n)-\mu\p(n))$ as a function of $\mu$.  For each $n$ let $\z(n)=\x(n)+i\y(n)$ and $\p(n)=\p_R(n)+i\p_I(n)$ with $\x(n),\y(n),\p_R(n),\p_I(n)\in\R^k$ and write
\begin{equation*}
f(\z(n)-\mu\p(n))=f((\x(n),\y(n))-\mu(\p_R(n),\p_I(n))).
\label{eq:RealMinProblem}
\end{equation*}
Suppose $f$ is twice $\R$-differentiable on $\Omega$.  As an approximate solution to this one-dimensional minimization problem we take $\mu(n)$ to be the minimizer of the second-degree Taylor polynomial (in $\mu$)
\begin{equation}
\begin{split}
T_2(\mu) &= f(\x(n),\y(n))\\
&\hspace{10mm}-\mu \left( \frac{\partial f}{\partial \x}(\x(n),\y(n)),\frac{\partial f}{\partial \y} (\x(n),\y(n))\right)\left( \begin{array}{c}\p_R(n) \\ \p_I(n) \end{array}\right)\\
&\hspace{10mm}+ \frac{1}{2} \mu^2 \left( \begin{array}{c}\p_R(n) \\ \p_I(n) \end{array}\right)^T\mathcal{H}_{\mathbf{r}\mathbf{r}}(\x(n),\y(n))  \left( \begin{array}{c}\p_R(n) \\ \p_I(n) \end{array}\right)
\end{split}
\label{eq:2ndTaylorExp}
\end{equation}
where $\mathcal{H}_{\mathbf{r}\mathbf{r}}$ denotes the real Hessian matrix
\begin{equation*}
\mathcal{H}_{\mathbf{r}\mathbf{r}} = \left( \frac{\partial}{\partial \x},\frac{\partial}{\partial \y}\right) \left( \frac{\partial f}{\partial \x}, \frac{\partial f}{\partial \y}\right)^T.
\label{eq:RealHess}
\end{equation*}
If
\begin{equation*}
\left(\begin{array}{c}\p_R(n) \\ \p_I(n) \end{array}\right)^T \mathcal{H}_{\mathbf{r}\mathbf{r}}(\x(n),\y(n))  \left( \begin{array}{c}\p_R(n) \\ \p_I(n) \end{array}\right) >0
\label{eq:MinCondition}
\end{equation*}
then $T_2$ has a minimum at
\begin{equation}
\mu(n)=\frac{\left( \frac{\partial f}{\partial \x}(\x(n),\y(n)),\frac{\partial f}{\partial \y} (\x(n),\y(n))\right)\left( \begin{array}{c}\p_R(n) \\ \p_I(n) \end{array}\right)}{\left(\begin{array}{c}\p_R(n) \\ \p_I(n) \end{array}\right)^T \mathcal{H}_{\mathbf{r}\mathbf{r}}(\x(n),\y(n))  \left( \begin{array}{c}\p_R(n) \\ \p_I(n) \end{array}\right)}.
\label{eq:ApproxSolnReal}
\end{equation}
(Note this is equivalent to taking one step toward minimizing $f$ over $\mu$ via the real Newton algorithm.)  Using a computation similar to (\ref{eq:ConditionTransform}) in the proof of Lemma \ref{lem:ExistenceofMu}, the denominator of (\ref{eq:ApproxSolnReal}) translates back into complex coordinates as (\cite{Kreutz-Delgado2009}, pg. 38):
\begin{equation}
\begin{split}
&\left(\begin{array}{c}\p_R(n) \\ \p_I(n) \end{array}\right)^T \mathcal{H}_{\mathbf{r}\mathbf{r}}(\x(n),\y(n))  \left( \begin{array}{c}\p_R(n) \\ \p_I(n) \end{array}\right)\\
&\hspace{10mm}=2 \Re \left\{\p(n)^* \mathcal{H}_{\z\z}(\z(n))\p(n) + \p(n)^*\mathcal{H}_{\overline{\z}\z}(\z(n))\overline{\p(n)} \right\}.
\end{split}
\label{eq:BilinearFormEq}
\end{equation}
Combining (\ref{eq:ApproxSolnReal}) with (\ref{eq:BilinearFormEq}) and (\ref{eq:ConditionTransform}), if 
\begin{equation}
\Re \left\{\p(n)^* \mathcal{H}_{\z\z}(\z(n))\p(n) + \p(n)^*\mathcal{H}_{\overline{\z}\z}(\z(n))\overline{\p(n)} \right\}>0
\label{eq:MinCondition}
\end{equation}
we can take the approximate solution to the minimization problem to be
\begin{equation}
\mu(n)=\frac{\mathrm{Re} \left\{ \frac{\partial f}{\partial \z}(\z(n)) \p(n)\right\}}{\Re \left\{\p(n)^* \mathcal{H}_{\z\z}(\z(n))\p(n) + \p(n)^*\mathcal{H}_{\overline{\z}\z}(\z(n))\overline{\p(n)} \right\}}.
\label{eq:ApproxSteplength}
\end{equation}
Notice that (\ref{eq:MinCondition}) is in fact both a necessary and sufficient condition to obtain an approximate solution using (\ref{eq:2ndTaylorExp}) to the one-dimensional minimization problem of $f(\z(n)-\mu\p(n))$ over $\mu$, for if
$$\Re \left\{\p(n)^* \mathcal{H}_{\z\z}(\z(n))\p(n) + \p(n)^*\mathcal{H}_{\overline{\z}\z}(\z(n))\overline{\p(n)} \right\}<0,$$
the Taylor polynomial (\ref{eq:2ndTaylorExp}) attains only a maximum.

\par
Since defining the sequence of steplengths $\{ \mu(n)\}$ by (\ref{eq:ApproxSteplength}) is only an approximate method, to guarantee the descent of the iteration, we consider further modification of the steplengths.  From Lemma \ref{lem:ExistenceofMu}, it is clear that we can choose a sequence of underrelaxation factors $\{ \omega(n)\}$ such that 
$$f(\z(n)-\omega(n)\mu(n)\p(n))<f(\z(n))$$
which guarantees that the iteration
\begin{equation}
\z(n+1)=\z(n)-\omega(n)\mu(n)\p(n), \textrm{ } n=0,1,...
\label{eq:DampedGeneralIterate}
\end{equation}
is a descent method.  We describe a way to choose the sequence $\{ \omega(n)\}$.

\par
First, recall some notation from Section VI.  Suppose $\Omega$ is open and let $\z(0)\in\Omega$. The level set of $\z(0)$ under $f$ on $\Omega$ is defined by (\ref{eq:levelset}), and $L_{\C^k}^0(f(\z(0)))$ is the path-connected component of $L_{\C^k}(f(\z(0))$ containing $\z(0)$.  Let $\Vert \cdot \Vert_{\C^k}:\C^k\to\R$ denote the Euclidean norm on $\C^k$, with $\Vert \z \Vert_{\C^k} =\sqrt{\z^* \z}$.

\begin{lemma}[Complex Version of the One-Step Newton Steplength Algorithm]
Let $f:\Omega\subseteq\C^k\to\R$ be twice-continuously $\R$-differentiable on the open set $\Omega$.  Suppose $L_{\C^k}^0(f(\z(0)))$ is compact for $\z(0)\in\Omega$ and that 
\begin{equation}
\eta_0 \mathbf{h}^*\mathbf{h} \le \mathrm{Re}\{\mathbf{h}^* \mathcal{H}_{\z\z}(\z)\mathbf{h} + \mathbf{h}^* \mathcal{H}_{\overline{\z}\z}(\z)\overline{\mathbf{h}}\} \le \eta_1\mathbf{h}^*\mathbf{h}
\label{eq:EtaCondition}
\end{equation}
for all $\z\in L_{\C^k}^0(f(\z(0)))$ and $\mathbf{h}\in\C^k$, where $0<\eta_0\le \eta_1$.  Fix $\epsilon \in (0,1]$.  Define the sequence $\{ \z(n)\}$ using (\ref{eq:DampedGeneralIterate}) with $\p(n)\neq0$ satisfying
\begin{equation}
\mathrm{Re}\left( \frac{\partial f}{\partial \z} (\z(n))(\p(n))\right) \ge 0,
\label{eq:ExistenceLemmaCondition}
\end{equation}\normalsize
$\mu(n)$ defined by (\ref{eq:ApproxSteplength}), and
\begin{equation}
0<\epsilon\le \omega(n) \le \frac{2}{\gamma(n)}-\epsilon,
\label{eq:UnderrelaxFactors}
\end{equation}
where, setting $\z=\z(n)$ and $\p=\p(n)$,
\begin{equation}
\begin{split}
\gamma(n)=&\sup \left. \left\{  \frac{\mathrm{Re}\{\p^* \mathcal{H}_{\z\z}(\z-\mu\p)\p + \p^* \mathcal{H}_{\overline{\z}\z}(\z-\mu\p)\overline{\p}\}}{\mathrm{Re}\{\p^* \mathcal{H}_{\z\z}(\z)\p + \p^* \mathcal{H}_{\overline{\z}\z}(\z)\overline{\p}\}} \right. \right\vert \\
&\hspace{45mm} \left.\begin{array}{c} \mu>0, \textrm{ } f(\z-\nu\p)<f(\z)\\
\textrm{for all }\nu\in(0,\mu]\end{array}\right\}.
\end{split}
\label{eq:GammaDef}
\end{equation}
Then $\{ \z(n)\}\subseteq L_{\C^k}^0(f(\z(0)))$,
\begin{equation*}
\lim_{n\to\infty} \frac{\mathrm{Re}\left( \frac{\partial f}{\partial \z} (\z(n))(\p(n))\right)}{\Vert \p(n)\Vert_{\C^k}}=0,
\label{eq:Limitto0Condition}
\end{equation*}
and $\lim_{n\to\infty} (\z(n)-\z(n+1))=0$.
\label{lem:SteplengthAlg}
\end{lemma}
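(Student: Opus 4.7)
The plan is to reduce the statement to a real-variable result. Identify $f:\Omega\subseteq\C^k\to\R$ with its real counterpart $\tilde f:D\subseteq\R^{2k}\to\R$ via $\z=\x+i\y$, as was done in the proof of Lemma on the existence of $\mu$. Under this identification, the iteration (\ref{eq:DampedGeneralIterate}) becomes a standard real descent iteration of the form $(\x(n+1),\y(n+1))^T = (\x(n),\y(n))^T - \omega(n)\mu(n)(\p_R(n),\p_I(n))^T$ in $\R^{2k}$. Moreover, using the identity (\ref{eq:BilinearFormEq}) together with (\ref{eq:ConditionTransform}), the complex steplength (\ref{eq:ApproxSteplength}) is exactly the real one-step Newton steplength (\ref{eq:ApproxSolnReal}) applied to $\tilde f$, and the hypothesis (\ref{eq:EtaCondition}) is equivalent to the uniform positive-definiteness estimate $\eta_0 \mathbf{r}^T\mathbf{r} \le \mathbf{r}^T \mathcal{H}_{\mathbf{r}\mathbf{r}}(\x,\y)\mathbf{r}\le \eta_1\mathbf{r}^T\mathbf{r}$ for all $\mathbf{r}\in\R^{2k}$, on the level set.

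First I would check these translations carefully: the quadratic form identity (\ref{eq:BilinearFormEq}), the cogradient identity (\ref{eq:ConditionTransform}), and the fact that level sets and path-components of $f$ and $\tilde f$ coincide. In particular $L_{\C^k}^0(f(\z(0)))$ maps to a compact path-connected level component of $\tilde f$, and (\ref{eq:ExistenceLemmaCondition}) becomes $\big(\tfrac{\partial \tilde f}{\partial \x},\tfrac{\partial \tilde f}{\partial \y}\big)(\p_R,\p_I)^T \ge 0$. The quantity $\gamma(n)$ in (\ref{eq:GammaDef}) is likewise invariant under the translation because of (\ref{eq:BilinearFormEq}), so the admissible range (\ref{eq:UnderrelaxFactors}) for $\omega(n)$ matches the admissible range for the underrelaxation factors in the real one-step Newton steplength algorithm of Ortega--Rheinboldt.

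With these identifications in place, the next step is to invoke the real version of the one-step Newton steplength theorem (specifically Theorem 8.3.1 and the supporting lemmas 8.2.x of \cite{Ortega1970}). That result says exactly that, under uniform positive-definiteness of the real Hessian on the compact level component, the one-step Newton steplength together with an underrelaxation factor chosen in $[\epsilon, 2/\gamma(n)-\epsilon]$ produces a descent sequence staying in the component, that the normalized directional derivative $\mathrm{Re}(\tfrac{\partial f}{\partial \z}(\z(n))\p(n))/\Vert\p(n)\Vert_{\C^k}$ tends to $0$, and that the successive differences $\z(n)-\z(n+1)$ tend to $0$. Translating each of these conclusions back to complex coordinates via the identifications above gives the three conclusions of the lemma.

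The main obstacle, and the content of the argument, is verifying that the complex-variable quantities in the hypotheses really do correspond to the real-variable quantities that Ortega--Rheinboldt require. Concretely, I would (i) show that $\mathrm{Re}\{\p^*\mathcal{H}_{\z\z}\p+\p^*\mathcal{H}_{\overline{\z}\z}\overline{\p}\}$ equals $\tfrac12\mathbf{r}^T\mathcal{H}_{\mathbf{r}\mathbf{r}}\mathbf{r}$ with $\mathbf{r}=(\p_R,\p_I)^T$, so that the bilinear upper/lower bounds in (\ref{eq:EtaCondition}) correspond to bounds on the real Hessian on the level component with constants $\eta_0/2$ and $\eta_1/2$; (ii) verify that the supremum defining $\gamma(n)$ in (\ref{eq:GammaDef}) is finite, using continuity of $\mathcal{H}_{\z\z},\mathcal{H}_{\overline{\z}\z}$, compactness of the level component, and the lower bound $\eta_0$; and (iii) check that the constraint (\ref{eq:ExistenceLemmaCondition}) plus nonvanishing of $\p(n)$ rules out $\mu(n)\le 0$, so that the scheme actually moves in a descent direction. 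Once these three translations are established, the conclusion of the lemma is a direct corollary of the classical real one-step Newton steplength theorem applied to $\tilde f$ on the corresponding compact level component in $\R^{2k}$.
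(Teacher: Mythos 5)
Your proposal follows essentially the same route as the paper's proof: translate all hypotheses to real coordinates via $\z=\x+i\y$ using the identities (\ref{eq:BilinearFormEq}) and (\ref{eq:ConditionTransform}), observe that the level component, steplength, underrelaxation range, and $\gamma(n)$ all correspond to their real counterparts, invoke the real one-step Newton steplength result of Ortega--Rheinboldt (the paper cites (14.2.9) rather than 8.3.1, but this is the same machinery), and translate the conclusions back. The correspondence checks you list, including the factor relating the complex quadratic form to $\mathcal{H}_{\mathbf{r}\mathbf{r}}$, match what the paper carries out.
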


\begin{proof}
Let $f:\Omega\subseteq\C^k\to\R$ be twice-continuously $\R$-differentiable on the open set $\Omega$, and define $D$ as in (\ref{eq:Ddef}). Then $D$ is open and $f(\x,\y):D\subseteq\R^{2k}\to\R$ is twice-continuously differentiable on $D$.  Let $\z(0)=\x(0)+i\y(0)\in\Omega$ with $\x(0),\y(0)\in\R^k$ and set 
$$L^0_{\R^{2k}}(f(\x(0),\y(0))=\left\{\left.\left(\begin{array}{c}\x \\ \y \end{array}\right)\in D \, \right\vert\, \begin{array}{c} \x,\y\in\R^k, \\ \z=\x+i\y\in L^0_{\C^{k}}(f(\z(0))) \end{array}\right\}.$$
It is clear that since $L_{\C^k}^0(f(\z(0)))$ is assumed to be compact, the real level set $L^0_{\R^{2k}}(f(\x(0),\y(0))$ is also compact.  
\par
Next, observe that for $\z=\x+i\y\in\C^k$ with $\x,\y\in\R^k$, if $\Vert \cdot\Vert_{R^{2k}}:\R^{2k}\to\R$ denotes the Euclidean norm on $\R^{2k}$, then
$$\Vert \z \Vert_{\C^k}^2=\z^*\z=\left\Vert \left( \begin{array}{c} \x \\ \y \end{array} \right) \right\Vert_{\R^{2k}}^2.$$
Using this fact and (\ref{eq:BilinearFormEq}) we see that for $\z=\x+i\y\in L_{\C^k}^0(f(\z(0)))$ and $\mathbf{h}=\mathbf{h}_R+i\mathbf{h}_I\in\C^k$ with $\x,\y,\mathbf{h}_R,\mathbf{h}_I\in\R^k$ the condition (\ref{eq:EtaCondition}) is equivalent to
\begin{equation*}
\eta_0' \left\Vert \left(\begin{array}{c} \mathbf{h}_R \\ \mathbf{h}_I\end{array} \right)\right\Vert_{\R^{2k}}^2 \le \left(\begin{array}{c} \mathbf{h}_R \\ \mathbf{h}_I\end{array} \right)^T \mathcal{H}_{\mathbf{r}\mathbf{r}}(\x,\y) \left(\begin{array}{c} \mathbf{h}_R \\ \mathbf{h}_I\end{array} \right) \le \eta_1' \left\Vert \left(\begin{array}{c} \mathbf{h}_R \\ \mathbf{h}_I\end{array} \right)\right\Vert_{\R^{2k}}^2,
\label{eq:RealEtaCondition}
\end{equation*}
where again $\mathcal{H}_{\mathbf{r}\mathbf{r}}$ denotes the real Hessian matrix of $f(\x,\y):D\subseteq\R^{2k}\to\R$, and $0<\eta_0'=\frac{\eta_0}{2}\le\frac{\eta_1}{2}=\eta_1'$.
\par
We have already seen in the proof of Lemma \ref{lem:ExistenceofMu} (see the calculation (\ref{eq:ConditionTransform})) that the condition (\ref{eq:ExistenceLemmaCondition}) on the vectors $\p(n)=\p_R(n)+i\p_I(n)$ with $\p_R(n),\p_I(n)\in\R^k$ is equivalent to the real condition
\begin{equation*}
\left( \frac{\partial f}{\partial \x}(\x(n),\y(n)),\frac{\partial f}{\partial \y}(\x(n),\y(n))\right) \left( \begin{array}{c}\p_R(n)\\ \p_I(n) \end{array}\right) \ge 0.
\label{eq:RealExistenceLemma}
\end{equation*}
We have also seen that our choice (\ref{eq:ApproxSteplength}) for $\mu(n)$ is equal to (\ref{eq:ApproxSolnReal}).
\par
Finally, for $\epsilon\in (0,1]$, using (\ref{eq:BilinearFormEq}) again we have the real analogue of (\ref{eq:GammaDef}):
\begin{equation*}
\begin{split}
&\gamma(n)=\\
&\sup \left. \left\{  \frac{\left(\begin{array}{c}\p_R(n)\\ \p_I(n)\end{array}\right)^T \mathcal{H}_{\mathbf{r}\mathbf{r}}((\x(n),\y(n))-\mu(\p_R(n),\p_I(n)))\left(\begin{array}{c}\p_R(n)\\ \p_I(n)\end{array}\right)}{\left(\begin{array}{c}\p_R(n)\\ \p_I(n)\end{array}\right)^T \mathcal{H}_{\mathbf{r}\mathbf{r}}(\x(n),\y(n))\left(\begin{array}{c}\p_R(n)\\ \p_I(n)\end{array}\right)} \right. \right\vert \\
&\hspace{10mm} \left.\begin{array}{c} \mu>0, \textrm{ } f((\x(n),\y(n))-\nu(\p_R(n),\p_I(n)))<f(\x(n),\y(n))\\
\textrm{for all }\nu\in(0,\mu]\end{array}\right\}.
\end{split}
\label{eq:RealGamma}
\end{equation*}
By (\ref{eq:ConditionTransform}),
\begin{equation*}
\frac{\left( \frac{\partial f}{\partial \x}(\x(n),\y(n)),\frac{\partial f}{\partial \y}(\x(n),\y(n))\right) \left(\begin{array}{c}\p_R(n)\\ \p_I(n)\end{array}\right)}{\left\Vert \left(\begin{array}{c}\p_R(n)\\ \p_I(n)\end{array}\right)\right\Vert_{\R^{2k}}}=\frac{2\Re \left(\frac{\partial f}{\partial \z}(\z(n)) \p(n) \right)}{\Vert \p(n)\Vert_{\C^k}},
\end{equation*}
so applying (14.2.9) in \cite{Ortega1970}, $\left\{ (\x(n), \y(n))^T\right\} \subseteq L^0_{\R^{2k}}(f(\x(0),\y(0)))$,
$$\lim_{n\to\infty}\frac{\left( \frac{\partial f}{\partial \x}(\x(n),\y(n)),\frac{\partial f}{\partial \y}(\x(n),\y(n))\right) \left(\begin{array}{c}\p_R(n)\\ \p_I(n)\end{array}\right)}{\left\Vert \left(\begin{array}{c}\p_R(n)\\ \p_I(n)\end{array}\right)\right\Vert_{\R^{2k}}}=0,$$
and 
$$\lim_{n\to\infty} \left( \left( \begin{array}{c} \x(n)\\ \y(n)\end{array}\right)-\left( \begin{array}{c} \x(n+1)\\ \y(n+1)\end{array}\right)\right)=0.$$
Translating back to complex coordinates yields the desired conclusion.\end{proof}

\par
Assume that there is a unique stationary point $ \hat{\z}$ in $L^0_{\C^k}(f(\z(0))$.  We desire to guarantee that the sequence of iterates $\{ \z(n)\}$ converges to $\hat{\z}$.  Before we give conditions for convergence of the complex version of the one-step Newton steplength algorithm, recall from Section VI that the R-factors of a sequence $\{\z(n)\}\subseteq\C^k$ that converges to $\hat{\z}\in\C^k$ are given by (\ref{eq:Rfactors}), and the sequence has at least an R-linear rate of convergence if $R_1\{\z(n)\}<1$.

\begin{lemma}[Convergence of the Complex Version of the One-Step Newton Steplength Algorithm]
Let $f:\Omega\subseteq\C^k\to\R$ be twice-continuously $\R$-differentiable on the open convex set $\Omega$ and assume that $L^0_{\C^k}(f(\z(0))$ is compact for $\z(0)\in\Omega$.  Assume the notation as in Lemma \ref{lem:SteplengthAlg}.  Suppose for all $z\in\Omega$,
\begin{equation}
\mathrm{Re}\{ \mathbf{h}^* \mathcal{H}_{\z\z}(\z)\mathbf{h}+\mathbf{h}^* \mathcal{H}_{\overline{\z}\z}(\z)\overline{\mathbf{h}}\}>0 \textrm{ for all } \mathbf{h}\in\C^k,
\label{eq:PosDefEquivCondition}
\end{equation}
and assume that the $\p(n)$ are nonzero vectors satisfying
\begin{equation}
\mathrm{Re}\left( \frac{\partial f}{\partial \z}(\z(n))\p(n)\right)\ge C \left\Vert \left( \frac{\partial f}{\partial \z}(\z(n))\right)^T\right\Vert_{\C^k} \Vert \p(n)\Vert_{\C^k}, \textrm{ } n=0,1,...
\label{eq:pCondition}
\end{equation}
for some fixed $C>0$. Assume $f$ has a unique stationary point $\hat{\z}$ in $L^0_{\C^k}(f(\z(0))$.  Then $\lim_{n\to\infty} \z(n)= \hat{\z}$, and the rate of convergence is at least R-linear.
\label{lem:GeneralConvergence}
\end{lemma}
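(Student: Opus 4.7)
The plan is to translate the problem into the real setting $\R^{2k}$ via the identification $\z=\x+i\y$, $\p=\p_R+i\p_I$, exactly as was done in the proof of Lemma \ref{lem:SteplengthAlg}, and then reduce to a known convergence result for descent methods on real domains. Under this identification, condition (\ref{eq:PosDefEquivCondition}) together with the standard bilinear-form equality (\ref{eq:BilinearFormEq}) says that the real Hessian $\mathcal{H}_{\mathbf{r}\mathbf{r}}(\x,\y)$ is positive definite on all of $D$. Because $L^0_{\R^{2k}}(f(\x(0),\y(0)))$ is compact and $\mathcal{H}_{\mathbf{r}\mathbf{r}}$ is continuous, positive definiteness on this compact set yields uniform constants $0<\eta_0'\le \eta_1'$ satisfying the real analogue of (\ref{eq:EtaCondition}); in particular the hypothesis of Lemma \ref{lem:SteplengthAlg} is met, and the hypothesis (\ref{eq:ExistenceLemmaCondition}) is implied by the stronger angle condition (\ref{eq:pCondition}) via $C>0$. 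Hence Lemma \ref{lem:SteplengthAlg} delivers $\{\z(n)\}\subseteq L^0_{\C^k}(f(\z(0)))$ together with
\[
\lim_{n\to\infty} \frac{\mathrm{Re}\left(\frac{\partial f}{\partial \z}(\z(n))\p(n)\right)}{\Vert \p(n)\Vert_{\C^k}}=0
\qquad\text{and}\qquad
\lim_{n\to\infty}(\z(n)-\z(n+1))=0.
\]

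Next I would extract convergence of the whole sequence. Combining the previous display with the angle condition (\ref{eq:pCondition}) gives
\[
\left\Vert \left(\tfrac{\partial f}{\partial \z}(\z(n))\right)^T\right\Vert_{\C^k}\longrightarrow 0.
\]
By compactness of $L^0_{\C^k}(f(\z(0)))$ the sequence $\{\z(n)\}$ has accumulation points; any such point $\z^\star$ lies in $L^0_{\C^k}(f(\z(0)))$ and satisfies $\frac{\partial f}{\partial \z}(\z^\star)=0$, i.e.\ is stationary in the sense defined in Section VI. Uniqueness of the stationary point then forces $\z^\star=\hat{\z}$, and the companion conclusion $\lim_n(\z(n)-\z(n+1))=0$ promotes this to convergence of the full sequence (a standard argument: if a sequence in a compact set has a unique accumulation point, the whole sequence converges to it). This is the cleanest place to use the uniqueness hypothesis.

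Finally I would establish the R-linear rate. Using the uniform positive-definite bounds and the convexity of $\Omega$, Taylor's theorem applied to the real representation $f(\x,\y)$ between $\z(n)$ and $\z(n)-\omega(n)\mu(n)\p(n)$, together with the explicit formula for $\mu(n)$ in (\ref{eq:ApproxSteplength}) and the lower bound (\ref{eq:UnderrelaxFactors}) on $\omega(n)$, yields a descent inequality of the form
\[
f(\z(n+1))-f(\hat{\z}) \le \left(1-\kappa\right)\bigl(f(\z(n))-f(\hat{\z})\bigr)
\]
for some constant $\kappa\in(0,1)$ depending only on $\eta_0',\eta_1',\epsilon$ and $C$. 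Iterating gives geometric decrease of $f(\z(n))-f(\hat{\z})$. A second application of the uniform lower bound on $\mathcal{H}_{\mathbf{r}\mathbf{r}}$ (Taylor expansion around $\hat{\z}$) provides the quadratic estimate $\tfrac{\eta_0'}{2}\Vert \z(n)-\hat{\z}\Vert^2_{\C^k}\le f(\z(n))-f(\hat{\z})$, from which $\Vert \z(n)-\hat{\z}\Vert_{\C^k}\le M(1-\kappa)^{n/2}$ follows, giving $R_1\{\z(n)\}\le (1-\kappa)^{1/2}<1$.

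The main obstacle I anticipate is the last step: extracting the explicit descent ratio $1-\kappa$ requires carefully combining the formula for the steplength $\mu(n)$ with the bounds on $\omega(n)$ that involve the a priori unknown quantity $\gamma(n)$ defined in (\ref{eq:GammaDef}), and then using the angle condition (\ref{eq:pCondition}) to re-express $\mathrm{Re}(\frac{\partial f}{\partial \z}(\z(n))\p(n))$ in terms of the gradient norm so that the bound becomes independent of the particular choice of $\p(n)$. Everything else is a direct translation from the real theory, but this quantitative step is where the complex formulation genuinely needs work, and where the transformation identities (\ref{eq:ConditionTransform}) and (\ref{eq:BilinearFormEq}) must be invoked most delicately.
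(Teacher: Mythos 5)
Your argument is correct in outline, but it takes a genuinely different route from the paper. The paper's proof is a pure reduction: it verifies that, under the coordinate and cogradient transformations, the hypotheses (\ref{eq:PosDefEquivCondition}) and (\ref{eq:pCondition}) become exactly the positive-definiteness and gradient-relatedness hypotheses of the real-domain convergence theorem (14.3.6) in \cite{Ortega1970}, and then cites that theorem to obtain both the convergence to $\hat{\z}$ and the R-linear rate in one stroke; no accumulation-point argument and no descent-ratio estimate appear in the paper at all. You instead re-derive the convergence: you invoke Lemma \ref{lem:SteplengthAlg}, combine its limit conclusion with the angle condition to force the cogradient norm to zero, use compactness plus uniqueness of the stationary point to get convergence of the full sequence, and then sketch a strong-convexity descent inequality to extract the rate. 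What your approach buys is self-containedness and an explicit mechanism for the rate; what the paper's approach buys is brevity and the avoidance of precisely the quantitative step you flag as the main obstacle. Two points in your sketch deserve attention if you carry it out: first, the per-iteration decrease bound must account for the fact that the uniform bounds $\eta_0',\eta_1'$ are only guaranteed on the compact level set, so you must confirm that the segment from $\z(n)$ to $\z(n+1)$ (and, for the Polyak-type inequality $\tfrac{\eta_0'}{2}\Vert\z(n)-\hat{\z}\Vert^2\le f(\z(n))-f(\hat{\z})$, the segment from $\z(n)$ to $\hat{\z}$) stays where those bounds hold --- this follows because positive-definiteness of $\mathcal{H}_{\mathbf{r}\mathbf{r}}$ on the convex set $D$ makes $f$ convex, hence the level set is convex and contains these segments, but you should say so; second, your geometric-decrease constant $\kappa$ must be shown uniform in $n$ despite $\omega(n)$ being constrained only through the a priori quantity $\gamma(n)$, which is exactly the bookkeeping that \cite{Ortega1970} performs and the paper inherits by citation.
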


\begin{proof}
As in the proof of Lemma \ref{lem:SteplengthAlg}, given the assumptions of this lemma, $f:D\subseteq\R^{2k}\to\R$  is twice-continuously (Frechet) differentiable on the open convex set $D$, and the set $L_{\R^{2k}}^0(f(\x(0),\y(0))$ is compact for $\z(0)=\x(0)+i\y(0)\in\Omega$, where $\x(0),\y(0)\in\R^k$.

\par
Using (\ref{eq:BilinearFormEq}), for $\z=\x+i\y\in\Omega$ the condition (\ref{eq:PosDefEquivCondition}) is equivalent to the condition 
\begin{equation*}
\left( \begin{array}{c} \mathbf{h}_1 \\ \mathbf{h}_2 \end{array}\right)^T \mathcal{H}_{\mathbf{r}\mathbf{r}}(\x,\y)\left( \begin{array}{c} \mathbf{h}_1 \\ \mathbf{h}_2 \end{array}\right)>0 \textrm{ for all } \left( \begin{array}{c} \mathbf{h}_1 \\ \mathbf{h}_2 \end{array}\right)\in\R^{2k} \textrm{ with }\mathbf{h}_1,\mathbf{h}_2\in\R^k.
\end{equation*}
Thus for all $(\x,\y)^T\in D$, the real Hessian $\mathcal{H}_{\mathbf{r}\mathbf{r}}(\x,\y)$ of $f$ is positive definite.

\par
Also as in the proof of Lemma \ref{lem:SteplengthAlg}, the real versions of the definitions of $\mu(n)$ and $\omega(n)$ given by (\ref{eq:ApproxSteplength}) and (\ref{eq:UnderrelaxFactors}), respectively, satisfy the real one-step Newton steplength algorithm (14.2.9) in \cite{Ortega1970}.

\par
Finally, for $\z=\x+i\y\in\C^k$ with $\x,\y\in\R^k$, a simple calculation shows that
$$2\left\Vert\left( \frac{\partial f}{\partial \z} (\z)\right)^T \right\Vert_{\C^k} = \left\Vert \left( \frac{\partial f}{\partial \x}(\x,\y),\frac{\partial f}{\partial \y}(\x,\y)\right)^T\right\Vert_{\R^{2k}},$$
so using the calculation (\ref{eq:ConditionTransform}) in the proof of Lemma \ref{lem:ExistenceofMu}, the condition (\ref{eq:pCondition}) for the nonzero vectors $\p(n)=\p_R(n)+i\p_I(n)$ with $\p_R(n),\p_I(n)\in\R^k$ is equivalent to the real condition
\begin{equation*}
\begin{split}
&\left( \frac{\partial f}{\partial \x}(\x,\y),\frac{\partial f}{\partial \y}(\x,\y)\right) \left( \begin{array}{c} \p_R(n) \\ \p_I(n) \end{array}\right) \\
&\hspace{15mm}\ge C \left\Vert \left( \frac{\partial f}{\partial \x}(\x,\y),\frac{\partial f}{\partial \y}(\x,\y)\right)^T \right\Vert_{\R^{2k}}  \left\Vert \left( \begin{array}{c} \p_R(n) \\ \p_I(n) \end{array}\right) \right\Vert_{\R^{2k}}.
\end{split}
\end{equation*}
Thus we may apply Theorem (14.3.6) in \cite{Ortega1970} and transfer back to complex coordinates to obtain that $\lim_{n\to\infty}\z(n)= \hat{\z}$, where $ \hat{\z}= \hat{\x}+i \hat{\y}$ with $\hat{\x},\hat{\y}\in\R^k$ is the unique stationary point of $f$ in $L^0_{\C^k}(f(\z(0)))$, and the rate of convergence is at least R-linear. \end{proof}

\par
We now apply the previous results to the complex Newton algorithm.  Let $f:\Omega\subseteq\C^k\to\R$ be twice-continuously $\R$-differentiable on the open convex set $\Omega$.  Let $\z(0)\in\Omega$ and assume that the level set $L^0_{\C^k}(f(\z(0)))$ is compact.  Suppose for all $\z\in\Omega$,
\begin{equation*}
\Re\{ \mathbf{h}^* \mathcal{H}_{\z\z}(\z)\mathbf{h} + \mathbf{h}^* \mathcal{H}_{\overline{\z}\z}(\z)\overline{\mathbf{h}}\}>0 \textrm{ for all }\mathbf{h}\in\C^k.
\label{eq:PosDefEquiv2}
\end{equation*}
As in the proof of Lemma \ref{lem:GeneralConvergence}, this condition is equivalent to the positive definiteness of the real Hessian matrix $\mathcal{H}_{\mathbf{r}\mathbf{r}}(\x,\y)$ of $f$ for all $(\x,\y)^T\in D$.  Since $f$ is twice-continuously $\R$-differentiable, the Hessian operator $\mathcal{H}_{\mathbf{r}\mathbf{r}}(\cdot):D\subseteq\R^{2k}\to L(\R^{2k})$ (where $L(\R^{2k})$ denotes the set of linear operators $\R^{2k}\to\R^{2k}$) is continuous.  Restricting to the compact set $L^0_{\R^{2k}}(f(\x(0),\y(0)))$ we have that $\mathcal{H}_{\mathbf{r}\mathbf{r}}(\cdot):L^0_{\R^{2k}}(f(\x(0),\y(0)))\to L(\R^{2k})$ is a continuous mapping such that $\mathcal{H}_{\mathbf{r}\mathbf{r}}(\x,\y)$ is positive definite for each vector $(\x,\y)^T\in L^0_{\R^{2k}}(f(\x(0),\y(0)))$.  For each $(\x,\y)^T\in L^0_{\R^{2k}}(f(\x(0),\y(0)))$ set
\begin{equation*}
 \tilde{\p}(\x,\y)=\mathcal{H}_{\mathbf{r}\mathbf{r}}(\x,\y)^{-1}\left( \frac{\partial f}{\partial \x}(\x,\y),\frac{\partial f}{\partial \y}(\x,\y) \right)^T.
\label{eq:TildepDefReal}
\end{equation*}
By Lemma (14.4.1) in \cite{Ortega1970}, there exists a constant $C>0$ such that
\begin{equation}
\begin{split}
&\left( \frac{\partial f}{\partial \x}(\x,\y),\frac{\partial f}{\partial \y}(\x,\y) \right) \tilde{\p}(\x,\y)\\
&\hspace{20mm}\ge C \left\Vert \left( \frac{\partial f}{\partial \x}(\x,\y),\frac{\partial f}{\partial \y}(\x,\y) \right)^T \right\Vert_{\R^{2k}}\Vert  \tilde{\p}(\x,\y) \Vert_{\R^{2k}}
\end{split}
\label{eq:RealGradRelated}
\end{equation}
for all $(\x,\y)^T\in L^0_{\R^{2k}}(f(\x(0),\y(0)))$.  As in the proof of Lemma \ref{lem:GeneralConvergence}, (\ref{eq:RealGradRelated}) is equivalent to the inequality
\begin{equation*}
\mathrm{Re}\left( \frac{\partial f}{\partial \z}(\z)\p(\z)\right)\ge C \left\Vert \left( \frac{\partial f}{\partial \z}(\z)\right)^T\right\Vert_{\C^k} \Vert \p(\z)\Vert_{\C^k}
\label{eq:ComplexGradRelated}
\end{equation*}
for all $\z\in L^0_{\R^{2k}}(f(\x(0),\y(0)))$, where
\begin{equation*}
\begin{split}
\tilde{\p}(\z)&= -\left[\mathcal{H}_{\z\z}(\z)-\mathcal{H}_{\overline{\z}\z}(\z)\mathcal{H}_{\overline{\z}\overline{\z}}(\z)^{-1}\mathcal{H}_{\z\overline{\z}}(\z)\right]^{-1} \\
& \hspace{10mm} \cdot\left[ \mathcal{H}_{\overline{\z}\z}(\z)\mathcal{H}_{\overline{\z}\overline{\z}}(\z)^{-1} \left(\frac{\partial f}{\partial \overline{\z}}(\z) \right)^*-\left(\frac{\partial f}{\partial \z}(\z) \right)^*\right]
\end{split}
\label{eq:pTildeDefComplex}
\end{equation*}
is obtained from $\tilde{\p}(\x,\y)$ (where $\z=\x+i\y$) using the coordinate and cogradient transformations (\ref{eq:CoordTransform}) and (\ref{eq:CogradTransform}), respectively \cite{Kreutz-Delgado2009}.  Suppose $f$ has a unique stationary point $\hat{\z}$ in $L^0_{\C^k}(f(\z(0)))$.  Consider the iteration
\begin{equation*}
\z(n+1)=\z(n)-\omega(n)\mu(n)\p(n), \textrm{ } n=0,1,...,
\label{eq:DampedGeneralIterate2}
\end{equation*}
where the $\p(n)$ are the nonzero complex Newton updates defined by $\p(n)=\tilde{\p}(\z(n))$, and assume the notation of Lemma \ref{lem:SteplengthAlg}.  Then $\{ \z(n)\}\subseteq L^0_{\C^k}(f(\z(0)))$. The vectors $\p(n)$ satisfy (\ref{eq:pCondition}), so by Lemma \ref{lem:GeneralConvergence} the sequence of iterates $\{ \z(n)\}$ converges to $\hat{\z}$, and the rate of convergence is at least R-linear.  Thus we have proved Theorem \ref{thm:ConvergenceNewton}.



%

\end{document}